\renewcommand{\dim}{\operatorname{dim}}
\newcommand{\Hom}{\operatorname{Hom}}
\newcommand{\Spec}{\operatorname{Spec}}
\newcommand{\length}{\operatorname{length}}
\newcommand{\tensor}{\otimes}
\newcommand{\reg}{\mathrm{reg}}
\newcommand{\Cl}{\operatorname{Cl}}
\newtheorem{theorem}{Theorem}[section]
\newtheorem{corollary}[theorem]{Corollary}
\newtheorem{lemma}[theorem]{Lemma}
\theoremstyle{definition}
\newtheorem{definition}[theorem]{Definition}
\newtheorem*{question}{Question}
\newtheorem{example}[theorem]{Example}
\theoremstyle{remark}
\newtheorem{remark}[theorem]{Remark}
\newtheorem{step}[]{Step}
\begin{document}

\title[Density of quasismooth hypersurfaces]{Density of quasismooth hypersurfaces\\in simplicial toric varieties}
\author{Niels Lindner}
\date{\today}
\subjclass[2010]{Primary 14J70; Secondary 11G25, 14G15, 14M25}
\keywords{Bertini theorems, finite fields, toric varieties}

\begin{abstract}
This paper investigates the density of hypersurfaces in a projective normal simplicial toric variety over a finite field having a quasismooth intersection with a given quasismooth subscheme. The result generalizes the formula found by B. Poonen for smooth projective varieties. As an application, we further analyze the density of hypersurfaces with bounds on their number of singularities and on the length of their singular schemes.
\end{abstract}

\maketitle

\section{Introduction}

Let $Y \subseteq \mathbb P^n$ be a smooth projective variety over a field $K$. Fix an integer $k \geq 1$.

\begin{question}
If $f \in H^0(\mathbb P^n, \mathcal O_{\mathbb P^n}(k))$ is a homogeneous polynomial of degree $k$ chosen uniformly at random, what is the probability that the intersection $Y\cap V(f)$ is smooth?
\end{question}

For algebraically closed fields $K$, the classical theorem of Bertini implies that the locus of hypersurfaces $f$ such that $Y \cap V(f)$ is smooth is open and dense, see e.g. \cite{Hartshorne}*{Theorem~II.8.18}. If $K = \mathbb F_q$ is a finite field with $q$ elements, then Poonen \cite{Poonen}*{Theorem~1.1} showed that
\begin{align*}
\lim_{k \to \infty} \frac{\#\{f \in H^0(\mathbb P^n, \mathcal O_{\mathbb P^n}(k)) \mid Y \cap V(f) \text{ is smooth}\}}{\#H^0(\mathbb P^n, \mathcal O_{\mathbb P^n}(k))} = \frac{1}{\zeta_Y(\dim Y + 1)},
\end{align*}
where $\zeta_Y$ is the Hasse-Weil zeta function of the projective variety $Y$, given by
$$ \zeta_Y(s) := \prod_{P \in Y \text{ closed point}} \left(1-q^{-s \deg P}\right)^{-1} = \exp \sum_{r = 1}^\infty \#Y(\mathbb F_{q^r})\frac{q^{-rs}}{r}$$
for $s \in \mathbb C$, $\mathrm{Re}(s) > \dim Y$.

The aim of this paper is to investigate how Poonen's result extends to quasismooth subschemes $Y$ of simplicial toric varieties, for example weighted projective spaces. Since we cannot expect smoothness anymore, we require instead that $Y$ be quasismooth (see Definition~\ref{D:qsm}) and ask for quasismooth intersections. The main result is the following:

\begin{theorem}\label{T:main}
Let $X$ be a projective normal simplicial toric variety over a finite field $\mathbb F_q$. Fix a Weil divisor $D$ and an ample Cartier divisor $E$ on $X$. Let $Y \subseteq X$ be any quasismooth subscheme such that $Y$ meets the singular locus of $X$ only in finitely many points. Then
\begin{align*}
\lim_{k \to \infty} &\frac{\#\{f \in H^0(X, \mathcal O_{X}(D+kE)) \mid Y \cap V(f) \text{ is quasismooth}\}}{\#H^0(X, \mathcal O_{X}(D+kE))} = \prod_{P \in Y \text{ closed}} \left(1-q^{-\nu_P(D)}\right),
\end{align*}
where $\nu_P(D)$ is a non-negative integer depending on $P$ and $D$ with the property that $\nu_P(D)$ equals $ \deg P \cdot (\dim Y + 1)$ if $Y$ is smooth at $P$.
\end{theorem}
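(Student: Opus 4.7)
My plan is to mimic Poonen's closed-point sieve, adapted to the Cox ring of $X$. Let $S = \bigoplus_{\alpha \in \Cl(X)} S_\alpha$ be the $\Cl(X)$-graded Cox ring of $X$, so that $H^0(X, \mathcal O_X(D+kE))$ is canonically identified with $S_{D+kE}$. For a closed point $P \in Y$, quasismoothness of $Y \cap V(f)$ at $P$ translates into a condition on the image of $f$ in a suitable local quotient of $S$, involving $f$ together with its partial derivatives along $Y$ with respect to the Cox coordinates. I would partition the closed points of $Y$ by degree into a low range, a medium range, and a high range, with thresholds tending to infinity with $k$, exactly as in Poonen's argument.

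For the low range (points of degree at most some bound $r_0$), I would define $\nu_P(D)$ as the codimension in $S_{D+kE}$ (for $k \gg 0$, once the relevant evaluation map stabilizes) of the subspace of $f$ for which $Y \cap V(f)$ fails to be quasismooth at $P$. Using quasismoothness of $Y$ and the ampleness of $E$, the Cox-ring evaluation map from $S_{D+kE}$ into the local $(\dim Y + 1)$-dimensional model becomes surjective for large $k$, giving a local density $1 - q^{-\nu_P(D)}$ at $P$. Independence over distinct points multiplies these out to a finite product that converges to $\prod_{P} (1 - q^{-\nu_P(D)})$ as $r_0 \to \infty$. When $Y$ is smooth at $P$, the Cox derivatives span the cotangent space and the local model is the direct sum of a $\deg P$-dimensional value coordinate and a $\deg P \cdot \dim Y$-dimensional derivative coordinate, producing the expected value $\nu_P(D) = \deg P \cdot (\dim Y + 1)$.

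The medium and high ranges require showing that the density of $f$ failing quasismoothness at \emph{some} point of $Y$ of correspondingly large degree tends to zero. In the medium range a Bezout/Lang--Weil-type count of $\mathbb F_{q^r}$-points on the non-quasismooth locus of $Y \cap V(f)$ should work, after locally trivializing $\mathcal O_X(D+kE)$ at smooth points of $X$ using the ample Cartier divisor $E$. For the high range I would adapt Poonen's derivative trick: after fixing low-order Taylor data of $f$, one perturbs $f$ by a monomial in a suitably chosen Cox variable to ensure that the non-quasismooth locus of $Y \cap V(f)$ is contained in a subscheme of controlled degree, which cannot contain points of sufficiently large degree. The hypothesis that $Y$ meets the singular locus of $X$ in only finitely many points is crucial here: outside a bounded set already absorbed into the low range, all relevant points lie in the smooth locus of $X$, where the derivative argument goes through as in the classical case.

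The hardest step will be the local analysis of $\nu_P(D)$ at the finitely many points where $Y$ meets the singular locus of $X$. At such a $P$ the Cox partial derivatives do not form a basis of the cotangent space of $X$ at $P$, so the naive count $\deg P(\dim Y + 1)$ breaks down, and one has to use the local toric description of $X$ as a quotient of affine space by a finite abelian group (together with $Y$'s quasismooth local model) to identify the correct codimension and verify that $\nu_P(D)$ is a positive integer. Once this invariant is in hand and the sieve bounds are established, Poonen's framework produces the stated limit as the product of the local densities $1 - q^{-\nu_P(D)}$.
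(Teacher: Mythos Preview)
Your outline is essentially the paper's own proof: a three-range closed-point sieve, with $\nu_P(D)$ defined as the $\mathbb F_q$-dimension of the local quotient encoding the quasismoothness condition at $P$ (the paper packages this as a zero-dimensional scheme $Y_P$), Lang--Weil for the medium range, and Poonen's decoupled-derivative trick carried out in Cox coordinates on each affine chart $U_\sigma$ for the high range. Two points to flag: the medium range needs a \emph{uniform} surjectivity bound for the evaluation map at points of degree up to a constant times $k$, which the paper extracts from Castelnuovo--Mumford regularity of $\mathcal O_X(D)$ with respect to $\mathcal O_X(E)$ via Mumford's multiplication theorem rather than from any B\'ezout count; and $\nu_P(D)$ need not be positive at the finitely many points where $Y$ meets $X^{\mathrm{sing}}$ (the paper gives examples with $\nu_P(D)=0$, in which case both sides of the formula vanish), so do not expect your local analysis there to yield strict positivity in general.
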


\begin{remark}
\begin{enumerate}[(1)]
 \item If $X = \mathbb P^n$, $D = 0$ and $E$ is a hyperplane, then we recover Poonen's result  \cite{Poonen}*{Theorem~1.1}.
 \item In \cite{Semiample}, Poonen's formula is generalized to a semiample setting. In the special case that $X$ is a smooth toric variety, the result \cite{Semiample}*{Theorem~1.1} implies our Theorem~\ref{T:main}.
 \item For a precise definition of the number $\nu_P(D)$ and its properties we refer to Subsection~\ref{SS:nuP}.
\item The formula in Theorem~\ref{T:main} is in particular valid if $\nu_P(D) = 0$ for some closed point $P \in Y$. In this case, both sides of the equation are zero. Moreover, $Y \cap V(f)$ fails to be quasismooth for all $f \in H^0(X, \mathcal O_X(D+kE))$ and all $k \geq 0$, see Corollary~\ref{C:low}. For a situation where $\nu_P(D) = 0$ occurs, see Example~\ref{E:wps1}. However, if $X$ is smooth or $D$ is Cartier, then $\nu_P(D)$ is always positive by Lemma~\ref{L:nuP}.
\item If the intersection of $Y$ with the singular locus of $X$ is of positive dimension, then Theorem~\ref{T:main} may fail, see Lemma~\ref{L:medium} and Example~\ref{E:wps2}.
\end{enumerate}
\end{remark}

The proof uses a modified version of Poonen's closed point sieve: We divide the closed points of $Y$ into low, medium and high degree points and show that the impact of the latter two is negligible. 

At first, we need to develop some preliminaries on simplicial toric varieties and quasismoothness. After an extensive study of restriction maps to zero-dimensional subschemes and the numbers $\nu_P(D)$ in Section~\ref{S:sections}, we can finally adapt Poonen's strategy to prove Theorem~\ref{T:main} in Section~\ref{S:sieve}.

In Section~\ref{S:applications} we give a formula for the density of quasismooth hypersurfaces with an upper bound on the number of singular points and the length of the singular schemes, respectively. Finally, for smooth toric varieties, we show that hypersurfaces of degree $k$ whose singular scheme is of length at least $k$ form a set of density zero.

\section{Facts on simplicial toric varieties}

We first collect some facts on toric varieties. Let $X$ be an $n$-dimensional projective normal simplicial split toric variety without torus factors over a perfect field $K$. Let $\Sigma$ be the corresponding simplicial fan in the lattice $N \cong \mathbb Z^n$. Denote by $M$ the dual lattice of $N$ and set $d$ to be the number of one-dimensional cones in $\Sigma$.

\subsection{The homogeneous coordinate ring (\cite{Cox}, \cite{CLS}*{§5.2, §5.3})}  Denote by $\Cl(X)$ the class group of $X$, i. e. the group of Weil divisors on $X$ modulo rational equivalence. Then there is an exact sequence
$$ 0 \to M \to \mathbb Z^{d} \xrightarrow{\vartheta} \Cl(X) \to 0 $$
of abelian groups.

The homomorphism $\vartheta$ induces a grading by the class group on the polynomial ring $S := K[x_1, \dots, x_d]$: If $x_1^{\alpha_1} \cdots x_d^{\alpha_d} \in S$ is a monomial, define $\deg(x_1^{\alpha_1} \cdots x_d^{\alpha_d}) := \vartheta(\alpha_1,\dots,\alpha_d)$. $S$ is called the homogeneous coordinate ring of $X$. Let $D, E$ be Weil divisors on $X$. There is a natural isomorphism
$$S \cong \bigoplus_{[D] \in \Cl(X)} H^0(X, \mathcal O_X(D))$$
of graded rings, which is compatible with the natural multiplication maps of sections
$$H^0(X, \mathcal O_X(D)) \otimes H^0(X, \mathcal O_X(E)) \to H^0(X, \mathcal O_X(D+E)).$$
Any finitely generated graded $S$-module $M$ gives rise to a coherent sheaf $\widetilde M$ on $X$, and conversely every coherent sheaf on $X$ arises this way. Furthermore, every homogenenous ideal of $S$ defines a closed subscheme of $X$, and every closed subscheme of $X$ comes from some homogeneous ideal of $S$.

\subsection{Quotient construction (\cite{CLS}*{§5.1})} Define $G := \Hom_{\mathbb Z}(\Cl(X),\mathbb G_m)$. If $L/K$ is a field extension, then $G$ acts on the $L$-rational points of $\mathbb A^d = \Spec S = \Spec K[x_1,\dots,x_d]$ via
$$ G \times L^d \to L^d, \quad g \cdot (a_1,\dots,a_d) \mapsto (g(\deg(x_1)) \cdot a_1, \dots, g(\deg(x_d)) \cdot a_d). $$
There is an algebraic set $B \subseteq \mathbb A^d$ depending on $X$, such that the quotient $\mathbb A^d \setminus B \to (\mathbb A^d \setminus B)/G$ is geometric and isomorphic to $X$. The singularities of $X$ are all due to the quotient action of $G$, which acts with finite isotropy groups on $\mathbb A^d \setminus B$.

\subsection{Quasismoothness}\label{SS:qsm}

\begin{definition}[see \cite{CoxQsm}*{Definition~3.1}]\label{D:qsm}
Denote by
$$\pi: \mathbb A^d \setminus B \to (\mathbb A^d \setminus B)/G \cong X$$
the quotient map. 
\begin{itemize}
  \item A subscheme $Y \subseteq X$ is called \textit{quasismooth at a closed point} $P \in Y$ if $\pi^{-1}(Y)$ is smooth at all points in the fiber $\pi^{-1}(P)$.
   \item $Y$ is called \textit{quasismooth} if it is quasismooth at all closed points.
\end{itemize}
\end{definition}

\begin{remark}\label{R:qsm} As above, let $X$ be any projective normal simplicial toric variety, and let $Y \subseteq X$ be a subscheme.
\begin{enumerate}
  \item $X$ is quasismooth.
  \item If $Y$ is smooth at $P \in Y$, it is also quasismooth at $P$.
  \item If $Y$ is quasismooth at $P \in Y$ and $X$ is smooth at $P$, then $Y$ is smooth at $P$.
  \item\label{I:qsm4} $Y$ is quasismooth at a closed point $P$ if and only if $\pi^{-1}(Y)$ is smooth at some point in $\pi^{-1}(P)$.
\end{enumerate}
\end{remark}

Testing quasismoothness means testing smoothness on the affine cone: For example, if $X = \mathbb P^n$, then a subscheme $Y \subseteq X$ is quasismooth if and only if the affine cone of $Y$ is smooth outside $\{0\}$. This is in turn equivalent to $Y$ being smooth.

Moreover, if $Y$ is a (quasi)smooth subscheme of $\mathbb P^n$ and $f$ is a homogeneous polynomial in $n+1$ variables, then the affine cone of $Y \cap V(f)$ is not smooth at a point $Q \in \mathbb A^{n+1}\setminus\{0\}$ if and only if the order of vanishing of $f$ at $Q$ is at least two. Such polynomials form a homogeneous ideal inside the polynomial ring in $n+1$ variables. This defines in turn a closed subscheme of $\mathbb P^n$. 

This works in general: Let $X$ be an arbitrary projective normal simplicial toric variety and let $Y$ be a quasismooth subscheme of $X$. Pick a global section $f \in H^0(X, \mathcal O_X(D))$ of some Weil divisor $D$ on $X$. Then the quasismoothness of $Y \cap V(f)$ is still a local condition on $Y$: If $P$ is a closed point of $Y$, we pull back the first-order infinitesimal neighborhood of all points in the affine quasicone lying over $P$. More precisely, we have the following:

\begin{lemma}\label{L:qsm}  Let $Y \subseteq X$ be a quasismooth subscheme, $P \in Y$ a closed point.
Then there is a closed subscheme $Y_P \subseteq X$ such that for all Weil divisors $D$ on $X$ and $f \in H^0(X, \mathcal O_X(D))$ we have
$$ Y \cap V(f) \text{ is quasismooth at } P \quad \Leftrightarrow \quad \varphi_{P,D}(f) \neq 0,$$
where $\varphi_{P,D}$ is the natural restriction map
$$ \varphi_{P,D}: \quad H^0(X,\mathcal O_X(D)) \to H^0(Y_P,\mathcal O_X(D)|_{Y_P}).$$
\end{lemma}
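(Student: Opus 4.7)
The plan is to work upstairs on $\mathbb{A}^d \setminus B$ via the quotient $\pi$, and to construct $Y_P$ as the descent of the first infinitesimal neighborhood of the fiber $\pi^{-1}(P)$ inside $\tilde Y := \pi^{-1}(Y)$.

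First I would reformulate quasismoothness of $Y \cap V(f)$ at $P$ in terms of upstairs data. Letting $\tilde f \in S$ denote the polynomial representing $f$, the definition of quasismoothness together with remark~(4) says that $Y \cap V(f)$ is quasismooth at $P$ if and only if $\tilde Y \cap V(\tilde f)$ is smooth at every (equivalently, some) point $Q \in \pi^{-1}(P)$. Since $Y$ is quasismooth at $P$, $\tilde Y$ itself is smooth at each such $Q$, and the Jacobian criterion applied at $Q$ reduces the condition to
\[ \tilde f \notin \mathcal I_{\tilde Y, Q} + \mathfrak m_Q^2 \quad \text{in } \mathcal O_{\mathbb{A}^d, Q}. \]

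Next I would construct $Y_P$ as follows. Let $\mathcal J$ be the ideal sheaf of the closed subset $\pi^{-1}(P) \subseteq \mathbb{A}^d \setminus B$, which is a single $G$-orbit. The ideal $\mathcal I_{\tilde Y} + \mathcal J^2$ is $G$-invariant, hence defines a $G$-equivariant zero-dimensional subscheme of $\mathbb{A}^d \setminus B$ that descends to a closed subscheme $Y_P \subseteq X$ supported at $P$. Under the identification of sections of $\mathcal O_X(D)$ with the degree-$D$ piece $S_D$, the restriction map $\varphi_{P, D}$ should correspond to the natural quotient $S_D \to S_D / (\mathcal I_{\tilde Y} + \mathcal J^2)_D$, so that $\varphi_{P,D}(f) = 0$ becomes equivalent to $\tilde f \in \mathcal I_{\tilde Y, Q} + \mathfrak m_Q^2$ at every (equivalently, some) $Q \in \pi^{-1}(P)$. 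Combined with the first step, this yields the desired equivalence.

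The hard part will be the bookkeeping for the Weil divisor $D$: since $\mathcal O_X(D)$ need not be locally free at a singular point $P$ of $X$, the identification of $\varphi_{P, D}$ with a quotient of $S_D$ has to be routed through the quotient construction, which simultaneously provides the right framework for descending the $G$-invariant ideal $\mathcal I_{\tilde Y} + \mathcal J^2$ to the subscheme $Y_P$ of $X$ and for justifying that the upstairs Jacobian criterion detects exactly the failure of quasismoothness downstairs.
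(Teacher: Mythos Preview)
Your overall strategy coincides with the paper's: work upstairs on $\mathbb A^d\setminus B$, use remark~(4) to pass between ``some $Q$'' and ``all $Q$'', and translate non-quasismoothness of $Y\cap V(f)$ at $P$ into the vanishing of $\tilde f$ in $\mathcal O_{\pi^{-1}(Y),Q}/\mathfrak m_Q^2$. The paper, however, constructs $Y_P$ differently: it simply takes $I_P$ to be the \emph{largest homogeneous ideal contained in} $\bigcap_{Q\in\pi^{-1}(P)}\ker\vartheta_Q$, where $\vartheta_Q:S\to\mathcal O_{\pi^{-1}(Y),Q}/\mathfrak m_Q^2$. With this definition the key equivalence is tautological: a homogeneous $f\in S_{[D]}$ lies in $(I_P)_{[D]}$ if and only if it lies in $\bigcap_Q\ker\vartheta_Q$, because $I_P$ by construction contains every homogeneous element of that intersection.

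Your route via $\mathcal I_{\tilde Y}+\mathcal J^2$ has a genuine gap that the paper's definition avoids. The fiber $\pi^{-1}(P)$ is a $G$-orbit of dimension $d-n>0$ (so your upstairs scheme is not zero-dimensional, contrary to what you wrote). Consequently, at any $Q\in\pi^{-1}(P)$ one has $\mathcal J_Q\subsetneq\mathfrak m_Q$, hence $\mathcal I_{\tilde Y,Q}+\mathcal J_Q^2\subsetneq\mathcal I_{\tilde Y,Q}+\mathfrak m_Q^2$. Your claimed identification $\varphi_{P,D}(f)=0\Leftrightarrow\tilde f\in\mathcal I_{\tilde Y,Q}+\mathfrak m_Q^2$ therefore needs an extra argument: you must show that for a $G$-semi-invariant $\tilde f$, vanishing to order two at a single point $Q$ of the orbit forces vanishing to order two \emph{along} the orbit (so that $\tilde f$ actually lands in $\mathcal I_{\tilde Y}+\mathcal J^2$). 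This is true---the tangential derivatives along the orbit are determined by the value via an Euler-type relation---but it is precisely the content you skipped, and it is not the ``hard part'' you flagged. The Weil-divisor bookkeeping, by contrast, is handled in the paper simply by invoking the isomorphism $S_{[D]}\cong H^0(X,\mathcal O_X(D))$ and the standard homogeneous-ideal/closed-subscheme correspondence for the Cox ring; no special care is needed.
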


\begin{proof}
Let $S$ be the homogeneous coordinate ring of $X$ and $\pi: \mathbb A^d \setminus B \to X$ the map from the quotient construction. For any $Q \in \pi^{-1}(P)$, there are natural maps
$$ \vartheta_Q:\quad S \to \mathcal O_{\pi^{-1}(Y),Q} \to \mathcal O_{\pi^{-1}(Y),Q}/\mathfrak m_Q^2, $$
where $\mathfrak m_Q$ is the maximal ideal of the local ring $\mathcal O_{\pi^{-1}(Y),Q}$ of $\pi^{-1}(Y)$ at $Q$. Denote by $I_P$ the largest homogeneous ideal of $S$ contained in $\bigcap_{Q \in \pi^{-1}(P)} \ker \vartheta_Q$ with respect to the grading given by $\Cl(X)$. Then $I_P$ defines a closed subscheme $Y_P$ of $X$.

Let $D$ be a Weil divisor on $X$. For $f \in S_{[D]}$, the intersection $Y \cap V(f)$ is not quasismooth at $P$ if and only if there is a point $Q \in \pi^{-1}(P)$ such that $\vartheta_Q(f) = 0$. By Remark~\ref{R:qsm}~(\ref{I:qsm4}), this is equivalent to $\vartheta_Q(f) = 0$ for all $Q \in \pi^{-1}(P)$, which is in turn equivalent to $f \in I_P \cap S_{[D]}$. In other words, $f$ lies in $\ker \varphi_{P,D}$, after applying the isomorphism $S_{[D]} \cong H^0(X, \mathcal O_X(D))$.
\end{proof}

\begin{example}\label{E:qsm}
 Assume $Y = X$ and let $P \in X$ be a closed point. By definition, the ideal $I_P$ inside the homogeneous coordinate ring $S = K[x_1,\dots,x_d]$ is generated by all homogeneous polynomials $f \in S$ such that
$$f(Q) = \frac{\partial f}{\partial x_1}(Q) = \dots = \frac{\partial f}{\partial x_d}(Q) = 0$$
for all $Q \in \pi^{-1}(P)$. Quasismoothness can hence be effectively tested with the Jacobian criterion on $\mathbb A^d \setminus B$.

Moreover, if $\mathfrak p$ is the prime ideal of $S$ corresponding to the point $P$, then $S/\mathfrak p$ is an integral domain. In particular, the fiber $\pi^{-1}(P)$ is an integral scheme over a perfect field and hence generically smooth. Since $S$ is a regular ring, we can invoke \cite{EH}*{Corollary~1} to obtain $$I_P = \mathfrak p^{(2)},$$
where $\mathfrak p^{(2)}$ denotes the symbolic square of $\mathfrak p$.

 More generally, let $Y \subseteq X$ be a closed quasismooth subscheme cut out by a homogeneous ideal $J_Y$ with respect to the grading by the class group $\Cl(X)$. Let $P \in Y$ be a closed point and denote by $\mathfrak p$ the prime ideal of $S$ defining $P$ in $X$. Then $\pi^{-1}(P)$ is generically smooth as above. Furthermore, since $Y$ is quasismooth, $\pi^{-1}(Y)$ is smooth and its coordinate ring is hence regular. We can apply \cite{EH}*{Corollary~1} again to see
$$ I_P = J_Y + \mathfrak p^{(2)}.$$
\end{example}
  
\section{Sections restricted to zero-dimensional subschemes}\label{S:sections}

Let $X$ be as above, $Y \subseteq X$ a quasismooth subscheme. Fix a Weil divisor $D$ and an ample Cartier divisor $E$ on $X$. We want to determine the proportion of sections of $D+kE$ having a quasismooth intersection with $Y$ as $k \to \infty$.

In view of Lemma~\ref{L:qsm}, we will take a closer look at the $K$-vector spaces $H^0(Y_P, \mathcal O_X(D)|_{Y_P})$ and the map $\varphi_{P,D}$.

\subsection{Surjectivity of $\varphi_{P,D}$}\label{SS:surj}
Let $Z$ be a zero-dimensional subscheme of $X$ and denote the corresponding closed immersion by $i: Z \hookrightarrow X$. Then there is an associated surjective map $\mathcal O_X \twoheadrightarrow i_*\mathcal O_Z$ of sheaves. Tensoring with $\mathcal O_X(D)$, taking the long exact sequence in cohomology and applying the projection formula, this yields a natural map on global sections
$$ \varphi_Z: H^0(X, \mathcal O_X(D)) \to H^0(Z, \mathcal O_X(D)|_Z).$$
This way, we recover $\varphi_{P,D}$ if $Z$ equals the scheme $Y_P$. Tensoring with $\mathcal O_X(D+kE)$ instead of $\mathcal O_X(D)$, we obtain 
$$ \varphi_{Z,k}: H^0(X, \mathcal O_X(D+kE)) \to H^0(Z, \mathcal O_X(D+kE)|_Z) \cong H^0(Z, \mathcal O_X(D)|_Z).$$
The last isomorphism comes from the fact that 
$$\mathcal O_X(D+kE) \cong \mathcal O_X(D) \otimes \mathcal O_X(E)^{\otimes k},$$ since $X$ is normal, and that $\mathcal O_X(E)$ is locally free of rank one, as $E$ is Cartier.

We see that $\varphi_{Z,k}$ is surjective if $H^1(X,\mathcal K \otimes \mathcal O_X(kE))$ vanishes, where $\mathcal K$ is the kernel of the surjection $\mathcal O_X(D) \to  \mathcal O_X(D)|_Z$. Since $\mathcal K$ is a coherent sheaf on the projective variety $X$ and $E$ is ample, we have the following result by Serre vanishing \cite{Hartshorne}*{Theorem~II.5.3}:

\begin{lemma}\label{L:surj1}
For any zero-dimensional subscheme $Z \subseteq X$ exists an integer $k_Z$ such that the natural map
$$\varphi_{Z,k}: H^0(X, \mathcal O_X(D+kE)) \to H^0(Z, \mathcal O_X(D)|_Z)$$
is surjective for all $k \geq k_Z$.
\end{lemma}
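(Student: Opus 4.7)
The approach is essentially dictated by the paragraph immediately preceding the lemma statement: we reduce surjectivity of $\varphi_{Z,k}$ to the vanishing of a certain $H^1$, and then invoke Serre vanishing on the projective variety $X$ with the ample divisor $E$.

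Concretely, I would start from the short exact sequence of coherent sheaves
$$ 0 \to \mathcal K \to \mathcal O_X(D) \to i_*(\mathcal O_X(D)|_Z) \to 0 $$
that defines $\mathcal K$ as the kernel of the restriction map, where $i : Z \hookrightarrow X$ is the closed immersion. Since $E$ is Cartier, the sheaf $\mathcal O_X(kE)$ is locally free, so tensoring preserves exactness; combined with the isomorphism $\mathcal O_X(D+kE) \cong \mathcal O_X(D) \tensor \mathcal O_X(kE)$ recorded in the text, the long exact sequence in cohomology produces
\begin{align*}
\cdots \to H^0(X, \mathcal O_X(D+kE)) \xrightarrow{\varphi_{Z,k}} H^0(Z, \mathcal O_X(D)|_Z) \to H^1(X, \mathcal K \tensor \mathcal O_X(kE)) \to \cdots,
\end{align*}
so that $\varphi_{Z,k}$ is surjective as soon as $H^1(X, \mathcal K \tensor \mathcal O_X(kE)) = 0$.

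For the second step I would apply Serre vanishing: since $\mathcal K$ is coherent (as the kernel of a map of coherent sheaves on the Noetherian scheme $X$) and $E$ is ample on the projective variety $X$, there exists an integer $k_Z$ such that $H^1(X, \mathcal K \tensor \mathcal O_X(kE)) = 0$ for all $k \geq k_Z$. Combined with the preceding observation this yields the claim.

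There is no real obstacle here; the lemma is essentially a packaged application of Serre vanishing to the kernel sheaf $\mathcal K$. The only point requiring mild care is the identification $H^0(Z, \mathcal O_X(D+kE)|_Z) \cong H^0(Z, \mathcal O_X(D)|_Z)$ used to rewrite the target of $\varphi_{Z,k}$, but this was already explained in the paragraph above the lemma, using that $\mathcal O_X(E)$ is invertible and $X$ is normal.
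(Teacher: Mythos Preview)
Your proposal is correct and follows exactly the argument the paper gives in the paragraph preceding the lemma: reduce surjectivity of $\varphi_{Z,k}$ to the vanishing of $H^1(X,\mathcal K \tensor \mathcal O_X(kE))$ via the long exact sequence, then invoke Serre vanishing for the coherent sheaf $\mathcal K$ and the ample divisor $E$. There is nothing to add.
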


The rest of this subsection is devoted to an improvement of this result. In order to achieve this, we need to have a look at multiplication of sections on toric varieties. Define $\reg_E(D)$ to be the smallest integer $\ell \geq 1$ such that
$$H^i(X, \mathcal O_X(D+kE - iE)) = 0 \quad \text{ for all } k \geq \ell \text{ and } i \geq 1.$$
The number $\reg_E(D)$ exists and coincides with the Castelnuovo-Mumford regularity of the sheaf $\mathcal O_X(D)$ with respect to the ample line bundle $\mathcal O_X(E)$ on $X$.

\begin{lemma}\label{L:multiplication}
The natural multiplication map
$$ H^0(X, \mathcal O_X(D+kE)) \otimes H^0(X, \mathcal O_X(E)) \to H^0(X, \mathcal O_X(D+(k+1)E))$$
is surjective for all $k \geq \reg_E(D)$.
\end{lemma}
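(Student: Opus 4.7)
The plan is to adapt the standard Castelnuovo--Mumford regularity argument via d\'evissage along a hyperplane in $|E|$. Since $E$ is an ample Cartier divisor on the complete toric variety $X$, the line bundle $\mathcal O_X(E)$ is globally generated, so one can pick a nonzero section $s \in H^0(X, \mathcal O_X(E))$ whose zero scheme is a Cartier divisor $H \sim E$. Multiplication by $s$ then gives the short exact sequence
$$ 0 \to \mathcal O_X(D+kE) \xrightarrow{\cdot s} \mathcal O_X(D+(k+1)E) \to \mathcal O_H(D+(k+1)E) \to 0. $$
Specialising the definition of $\reg_E(D)$ to $i=1$ yields $H^1(X, \mathcal O_X(D+kE)) = 0$ for $k \geq \reg_E(D)-1$, so the long exact cohomology sequence gives a surjection $H^0(X, \mathcal O_X(D+(k+1)E)) \twoheadrightarrow H^0(H, \mathcal O_H(D+(k+1)E))$ whose kernel is exactly $s \cdot H^0(X, \mathcal O_X(D+kE))$. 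That kernel is already contained in the image of the multiplication map, so the problem reduces to proving that the composed map
$$ H^0(X, \mathcal O_X(D+kE)) \tensor H^0(X, \mathcal O_X(E)) \to H^0(H, \mathcal O_H(D+(k+1)E)) $$
is surjective.

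I would factor this composition as (restriction) $\tensor$ (restriction) followed by multiplication on $H$. The restriction $H^0(X, \mathcal O_X(D+kE)) \twoheadrightarrow H^0(H, \mathcal O_H(D+kE))$ is surjective by the same cohomology argument, using $H^1(\mathcal O_X(D+(k-1)E)) = 0$, which again holds for $k \geq \reg_E(D)$. The restriction $H^0(X, \mathcal O_X(E)) \twoheadrightarrow H^0(H, \mathcal O_H(E))$ is surjective because $H^1(X, \mathcal O_X) = 0$ on a complete toric variety by Demazure vanishing. Dimension shifting in these cohomology sequences shows that $\mathcal O_H(D)$ is $\reg_E(D)$-regular with respect to the base-point-free line bundle $\mathcal O_H(E)$, so surjectivity of multiplication on $H$ follows by induction on $\dim X$, with trivial base case $\dim X = 0$.

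The main obstacle is that $H$ will in general fail to be a toric variety, so the Demazure vanishing $H^1(\mathcal O_H)=0$ invoked above may break down already at the second step of the induction. I would address this by strengthening the inductive hypothesis to Mumford's multiplication theorem for arbitrary projective schemes equipped with a base-point-free ample line bundle; equivalently, by working with the syzygy bundle $\mathcal M_E$ fitting into the evaluation sequence $0 \to \mathcal M_E \to H^0(X, \mathcal O_X(E)) \tensor \mathcal O_X \to \mathcal O_X(E) \to 0$, which recasts the desired surjectivity as the single vanishing $H^1(X, \mathcal M_E \tensor \mathcal O_X(D+kE)) = 0$. This last vanishing can be extracted directly from $\reg_E(D)$-regularity of $\mathcal O_X(D)$ together with iterated use of the defining cohomology conditions, bypassing any appeal to $H^1(\mathcal O_Y)=0$ on successive hyperplane sections $Y$.
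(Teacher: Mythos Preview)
The paper's own proof of this lemma is a one-line citation: ``See \cite{Mumford}*{Theorem~2}.'' In other words, the lemma is precisely Mumford's multiplication theorem for a sheaf $\mathcal O_X(D)$ that is $\reg_E(D)$-regular with respect to the globally generated ample line bundle $\mathcal O_X(E)$ (ample Cartier divisors on projective toric varieties being base-point-free). Your proposal is a correct sketch of how one actually proves that theorem: the d\'evissage along a section of $\mathcal O_X(E)$, the observation that the toric hypothesis does not survive passage to $H$, and the two standard remedies (either run the induction in the category of all projective schemes with a globally generated ample line bundle, or recast the surjectivity as the vanishing of $H^1(\mathcal M_E \otimes \mathcal O_X(D+kE))$ via the evaluation sequence). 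So you are not taking a different route from the paper; you are supplying the content behind its citation. The only efficiency you give up is that, once you recognise the statement as Mumford's theorem verbatim, no toric-specific input (Demazure vanishing, etc.) is needed at all---the general result already covers it.
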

\begin{proof}
See \cite{Mumford}*{Theorem~2}.
\end{proof}

We will now give an enhanced version of Lemma~\ref{L:surj1}:

\begin{lemma}\label{L:surj2}
For all zero-dimensional subschemes $Z$ the map $\varphi_{Z,k}$ is surjective whenever
$$k \geq \dim_{K} H^0(Z, \mathcal O_X(D)|_Z) + \reg_E(D) - 1.$$
\end{lemma}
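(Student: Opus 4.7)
Abbreviate $V_k := H^0(X, \mathcal O_X(D+kE))$, $W_k := H^0(Z, \mathcal O_X(D+kE)|_Z)$, $I_k := \im(\varphi_{Z,k}) \subseteq W_k$; set $N := \dim_K W_0$ and $k_0 := \reg_E(D)$. Since $Z$ is zero-dimensional and $\mathcal O_X(E)|_Z$ is an invertible $\mathcal O_Z$-module on the Artinian scheme $Z$, each $W_k$ has $K$-dimension exactly $N$. The plan is to show that $(\dim_K I_k)_{k \geq k_0}$ grows strictly by at least one at every step for which $I_k \subsetneq W_k$, starting from a bound $\dim_K I_{k_0} \geq 1$; this forces $I_k = W_k$ for every $k \geq k_0 + N - 1$.

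\textbf{Step 1: starting bound.} Since $\mathcal O_X(D)$ is $k_0$-regular with respect to $\mathcal O_X(E)$, Mumford's theorem implies that $\mathcal O_X(D + k_0 E)$ is generated by its global sections. Applied at each closed point $P \in Z$, this gives, for every $P$, a section in $V_{k_0}$ not vanishing at $P$, and hence shows that $I_{k_0}$ generates $W_{k_0}$ as an $\mathcal O_Z$-module; in particular $\dim_K I_{k_0} \geq 1$.

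\textbf{Step 2: multiplicative identification.} By Lemma~\ref{L:multiplication}, $V_{k+1} = A_1 \cdot V_k$ for every $k \geq k_0$, where $A_1 := H^0(X, \mathcal O_X(E))$; restricting to $Z$ yields the key identity $I_{k+1} = \psi_1(A_1) \cdot I_k$ with $\psi_1 : A_1 \to B_1 := H^0(Z, \mathcal O_X(E)|_Z)$. Choose a generator $b_0$ of the invertible $\mathcal O_Z$-module $B_1$ (possible because $\mathcal O_X(E)$ is base-point-free at $Z$, an ample Cartier divisor on a projective toric variety being globally generated) and set $A' := b_0^{-1} \psi_1(A_1) \subseteq \mathcal O_Z$, a $K$-subspace. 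The $K$-linear isomorphism $b_0^j : W_{k_0} \xrightarrow{\sim} W_{k_0 + j}$ then identifies $I_{k_0 + j}$ with the $K$-subspace $(A')^j \cdot I_{k_0} \subseteq W_{k_0}$, via an iteration of the relation $I_{k+1} = b_0 \cdot (A' \cdot I_k)$.

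\textbf{Step 3: forced strict growth.} By Lemma~\ref{L:surj1}, some $(A')^j \cdot I_{k_0}$ coincides with $W_{k_0}$. An ascending chain of $K$-subspaces in a finite-dimensional space stabilizes as soon as one inclusion becomes an equality, so each inclusion $(A')^j \cdot I_{k_0} \subseteq (A')^{j+1} \cdot I_{k_0}$ must be strict until the chain reaches $W_{k_0}$. Starting from $\dim_K I_{k_0} \geq 1$, the chain attains dimension $N$ after at most $N - 1$ strict steps, yielding $I_{k_0 + N - 1} = W_{k_0 + N - 1}$; the same analysis then propagates surjectivity to all larger $k$.

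\textbf{Main obstacle.} The conceptual core is the multiplicative identification of Step~2 together with the forced-strict-growth dichotomy of Step~3: one must verify carefully how $\psi_1(A_1) \subseteq B_1$ interacts with the $\mathcal O_Z$-module structure on $W_\bullet$, in particular that the chain $(A')^j \cdot I_{k_0}$ is genuinely ascending. The ampleness of $E$ on the toric variety $X$ enters through base-point-freeness of $\mathcal O_X(E)$ at $Z$ (which over a small finite base field may force one to work temporarily with $A_j$ for $jE$ sufficiently separating, using $V_{k+j} = A_j \cdot V_k$), and combining this with Lemma~\ref{L:surj1} rules out any ``stalled proper chain'' scenario that would obstruct strict growth.
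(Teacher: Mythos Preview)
Your approach is essentially the paper's: both build a chain of subspaces of the $N$-dimensional target $H^0(Z,\mathcal O_X(D)|_Z)$, drive the chain forward one step at a time via Lemma~\ref{L:multiplication}, and combine Lemma~\ref{L:surj1} with a stabilisation argument to force strict growth until the chain saturates. Your Step~1 (global generation of $\mathcal O_X(D+k_0E)$ giving $\dim_K I_{k_0}\ge 1$) is a clean substitute for the paper's implicit observation that the chain cannot stall at $\{0\}$.

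The gap is precisely where you flag it. For $J_j=(A')^j\cdot I_{k_0}$ to be an \emph{ascending} chain you need $1\in A'$, i.e.\ the generator $b_0$ of $B_1$ must lie in $\psi_1(A_1)$. Base-point-freeness of $\mathcal O_X(E)$ only says that for each point of $Z$ \emph{some} global section is a unit there; it does not produce a single section that is simultaneously a unit on all of $Z$, and over a small finite field no such section need exist (take $E$ a hyperplane class on $\mathbb P^n$ and $Z$ meeting every $\mathbb F_q$-rational hyperplane). Without $1\in A'$ one can have $J_j\not\subseteq J_{j+1}$: for instance with $\mathcal O_Z=K[\epsilon]/(\epsilon^2)\times K$, $A'=K\cdot(1,0)+K\cdot(\epsilon,1)$ and $J_0=K\cdot(1,1)$, one gets $J_1=K\cdot(1,0)+K\cdot(\epsilon,1)\not\supseteq J_0$. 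Then neither ``equal dimension $\Rightarrow$ equal subspace'' nor the strict-growth count is available. Your proposed workaround---pass to $A_j=H^0(X,\mathcal O_X(jE))$ with $j$ large enough that a non-vanishing section exists---only yields surjectivity for $k\ge k_0+(N-1)j$, which is weaker than the stated bound.

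The paper's resolution is simpler: surjectivity of the $K$-linear map $\varphi_{Z,k}$ is insensitive to flat base change, so one first enlarges $K$ until a section $f_0\in H^0(X,\mathcal O_X(E))$ with $V(f_0)\cap Z=\emptyset$ exists. Then $b_0:=\psi_1(f_0)$ is a unit of $B_1$ lying in $\psi_1(A_1)$, hence $1\in A'$, and your chain argument (equivalently the paper's dehomogenisation with respect to $f_0$) runs without obstruction.
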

\begin{proof}
Let $Z$ be a zero-dimensional subscheme of $X$. Since cohomology commutes with flat base change, we can check the surjectivity of the map $\varphi_{Z,k}$ after a base change to some field extension. Thus we can w.l.o.g. assume the existence of a section $f_0 \in H^0(X ,\mathcal O_X(E)) \cong S_{[E]}$ defined over $K$ satisfying $V(f_0) \cap Z = \emptyset$. Choose elements $f_1, \dots, f_s \in S_{[E]}$ such that $\{f_0, \dots, f_s\}$ forms a $K$-basis of $S_{[E]}$.

By Lemma~\ref{L:multiplication}, we have surjective multiplication maps
$$ H^0(X, \mathcal O_X(D+\ell E)) \tensor H^0(X,\mathcal O_X(E))^{\tensor k-\ell} \twoheadrightarrow H^0(X,\mathcal O_X(D+kE)), \quad k \geq \ell := \reg_E(D).$$
These maps are compatible with the isomorphisms $H^0(X,\mathcal O_X(-)) \cong S_{[-]}$. Identify now  $H^0(X,\mathcal O_X(E))^{\otimes (k-\ell)}$ with the space of homogeneous polynomials in $f_0, \dots, f_s$ of degree $k-\ell$, where $\ell := \reg_E(D)$. Homogenization via $f_0$ yields an isomorphism
\begin{align*}
S_{[D+\ell E]}\otimes K[f_1,\dots,f_s]_{\leq k-\ell} \cong S_{[D+\ell E]} \otimes K[f_0,\dots,f_s]_{k-\ell}
\end{align*}
and we thus obtain a surjective $K$-linear map
\begin{align*}
S_{[D+\ell E]}\, \otimes\, K[f_1,\dots,f_s]_{\leq k-\ell}
 &\twoheadrightarrow S_{[D+\ell E]} \otimes S_{[E]}^{\otimes(k-\ell)}
\\ &\cong H^0(X,\mathcal O_X(D+\ell E)) \otimes H^0(X,\mathcal O_X(E))^{\otimes (k-\ell)}
\\ &\twoheadrightarrow H^0(X,\mathcal O_X(D+kE)).
\end{align*}
Consider the composition
\begin{align*}
\vartheta_k: S_{[D+\ell E]} \otimes K[f_1,\dots,f_s]_{\leq k-\ell} \to H^0(X, \mathcal O_X(D+kE))
\xrightarrow{\varphi_{Z,k}} H^0(Z, \mathcal O_X(D)|_Z).
\end{align*}
The linear map $\vartheta_k$ becomes surjective for large enough $k$ by Lemma~\ref{L:surj1}. Furthermore, if $\vartheta_k$ is surjective, then so is $\varphi_{Z,k}$. Define the subspaces
$$ B_j := \vartheta_k\left(S_{[D+\ell E]} \otimes K[f_1,\dots,f_s]_{\leq j} \right),\quad j = -1, \dots, k-\ell.$$
This yields an ascending chain of subspaces $\{0\} = B_{-1} \subseteq B_{0} \subseteq ...$, thus for some $j \geq -1$ holds $B_j = B_{j+1}$. Then, if $[f_i]$ denotes the image of $f_i$ in $H^0(Z, \mathcal O_Z)$, we obtain
\begin{align*}
B_{j+2} = \sum_{i=1}^s [f_i] \cdot B_{j+1} =  \sum_{i=1}^s [f_i] \cdot B_{j} = B_{j+1}.
\end{align*}
A fortiori, $B_r = B_j$ for $r \geq j$. But $\vartheta_k$ is eventually surjective, so as soon as $B_j = B_{j+1}$, it must be the all of $H^0(Z, \mathcal O_X(D)|_Z)$ for large $k$. This means that $\vartheta_k$ and hence $\varphi_{Z,k}$ are surjective whenever
$$k - \ell \geq \dim_K H^0(Z, \mathcal O_X(D)|_Z) - 1.$$
\end{proof}

\subsection{Dimension of $H^0(Y_P, \mathcal O_X(D)|_{Y_P})$}\label{SS:nuP} 
\begin{definition}\label{D:nuP}
With the same notation as above, define
$$ \nu_P(D) := \dim_K H^0(Y_P, \mathcal O_X(D)|_{Y_P}).$$ 
\end{definition}

\begin{remark}
A general recipe to compute $\nu_P(D)$ is the following: Let $\pi: \mathbb A^d\setminus B \to X$ denote the quotient map. Pick a closed point $P \in Y$. By Lemma~\ref{L:qsm}, a section $f \in H^0(X, \mathcal O_X(D+kE))$ lies in the kernel of
$$\varphi_{Y_P,k}: H^0(X, \mathcal O_X(D+kE)) \to H^0(Y_P, \mathcal O_{X}(D)|_{Y_P})$$
if and only if $V(f)$ is not quasismooth at $P$, i. e. if and only if $\pi^{-1}(V(f))$ is not smooth at some point $Q \in \pi^{-1}(P)$. The latter condition can be tested with the Jacobian criterion and gives therefore an effective description of $\ker \varphi_{Y_P,k}$. Since $\varphi_{Y_P,k}$ is surjective for $k \gg 0$ by Lemma~\ref{L:surj1}, this computes the number $\nu_P(D)$ as the codimension of $\ker \varphi_{Y_P,k}$ in $H^0(X, \mathcal O_X(D+kE))$.
\end{remark}

\begin{remark}
An alternative description is the following: If $Y \subseteq X$ is a closed subscheme, we can use the formula from Example~\ref{E:qsm}: Let $S$ denote the homogeneous coordinate ring of $X$ and let $J_Y$ be the ideal of $Y$ inside $S$. Pick a closed point $P \in Y$ and let $\mathfrak p$ denote the corresponding prime ideal in $S$. Then, for $k \gg 0$, $\nu_P(D)$ equals the dimension of the degree $[D+kE]$-part of the $\Cl(X)$-graded $S$-module $S/(J_Y + \mathfrak p^{(2)})$ .
\end{remark}

%

\begin{example}\label{E:wps0}
 Let $X$ be the weighted projective space $\mathbb P(1,\dots,1,2)$ with the coordinates $x_0, \dots, x_n$. Furthermore, let $Y = X$, $D = V(x_n)$ and $E = V(x_0)$. Then $H^0(X, \mathcal O_X(D+kE))$ corresponds to the space of weighted homogeneous polynomials in the variables $x_0,\dots,x_n$ of degree $2k+1$. Such a polynomial $f$ can be written as
$$ f= \sum_{i=0}^k x_n^i \cdot f_i(x_0,\dots,x_{n-1}), \quad f_i \text{ homogeneous of degree } 2(k-i)+1.$$
 If $Q \in \mathbb A^{n+1} \setminus \{0\}$ lies over the singular point $P = (0:\dots:0:1)$, then one computes that both $f$ and $\frac{\partial f}{\partial x_n}$ always vanish at $Q$. Moreover, the partial derivatives $\frac{\partial f}{\partial x_0}, \dots, \frac{\partial f}{\partial x_{n-1}}$ vanish simultaneously at $Q$ if and only if $f_k = 0$. Thus $f$ lies in $\ker \varphi_{Y_P,k}$ if and only if $f_k = 0$. Since $f_k$ is a linear homogeneous polynomial in $n$ variables, this is a codimension $n$ condition. Hence $\nu_P(D) = \deg P \cdot n = n$.

Alternatively, let $\mathfrak p = \langle x_0, \dots, x_{n-1} \rangle$ be the prime ideal of the polynomial ring $S = K[x_0,\dots,x_n]$ corresponding to $P = (0: \dots :0:1)$. One checks that $\mathfrak p^{(2)} = \mathfrak p^2$, so
$$\nu_P(D) = \lim_{k \to \infty} \dim_K (S/\mathfrak p^2)_{2k+1} = n,$$
as $(S/\mathfrak p^2)_{2k+1}$ is spanned by the classes of $x_0 x_n^k, x_1 x_n^k, \dots, x_{n-1} x_n^k$.
\end{example}

For another computation of $\nu_P(D)$, see Example~\ref{E:wps1}. We summarize some properties of the number $\nu_P(D)$ in the following lemma:

\begin{lemma}\label{L:nuP} Let $P$ be a closed point of $Y$.
\begin{enumerate}[\normalfont (1)]
  \item $\nu_P(D)$ is divisible by $\deg P$.
  \item If $D$ is Cartier, then $\nu_P(D) \geq \deg P$.
  \item If $X$ is smooth at $P$, then $\nu_P(D) = \deg P \cdot (\dim Y + 1)$.
  \item In general, $\nu_P(D) \leq \deg P \cdot \dim \pi^{-1}(Y)$,
  where $\pi$ is the map from the quotient construction.
\end{enumerate}
\end{lemma}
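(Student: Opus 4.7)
For \textbf{part (1)}, I first observe that each $f \in I_P$ lies in $\mathfrak{m}_Q$ for every $Q \in \pi^{-1}(P)$, so $Y_P = V(I_P)$ is supported at the single closed point $P$. Hence $\mathcal{O}_{Y_P, P}$ is a local Artinian $K$-algebra with residue field $k(P)$, and $H^0(Y_P, \mathcal{O}_X(D)|_{Y_P})$ is an $\mathcal{O}_{Y_P}$-module of finite length whose composition factors are all isomorphic to $k(P)$; each contributes $\deg P$ to the $K$-dimension, and the claimed divisibility follows.

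For \textbf{part (2)}, assume $X$ is smooth at $P$. Then $Y$ is smooth at $P$ by Remark~(3) of \ref{SS:qsm}, and $D$ is Cartier near $P$ because every Weil divisor on a normal scheme is Cartier at a smooth point. Since the $G$-action is free on the smooth locus of $X$, the map $\pi$ is étale near $\pi^{-1}(P)$, and by étale descent $Y_P$ is identified with the first-order infinitesimal neighborhood $\Spec(\mathcal{O}_{Y,P}/\mathfrak{m}_P^2)$ of $P$ in $Y$. Regularity of $\mathcal{O}_{Y,P}$ gives $\dim_{k(P)}(\mathcal{O}_{Y,P}/\mathfrak{m}_P^2) = \dim Y + 1$, and invertibility of $\mathcal{O}_X(D)$ near $P$ trivializes $\mathcal{O}_X(D)|_{Y_P} \cong \mathcal{O}_{Y_P}$, yielding $\nu_P(D) = \deg P \cdot (\dim Y + 1)$.

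For \textbf{part (3)}, the key observation is that $\bigcap_Q \ker \vartheta_Q$ is already $G$-invariant (since $G$ permutes the fiber $\pi^{-1}(P)$), hence a graded ideal of $S$, so $I_P$ coincides with this intersection; it cuts out the first-order infinitesimal thickening $F_1$ of $F := \pi^{-1}(P)$ inside the smooth scheme $\pi^{-1}(Y)$. By the quotient construction, $\nu_P(D)$ equals the $K$-dimension of the $[D]$-weight subspace of the $G$-representation $H^0(F_1, \mathcal{O}_{F_1})$. I would exploit the conormal exact sequence
\begin{equation*}
0 \to I_F/I_F^2 \to \mathcal{O}_{F_1} \to \mathcal{O}_F \to 0
\end{equation*}
of $G$-equivariant sheaves on $F$; since $F$ is a single $G$-orbit with finite stabilizer, a Frobenius reciprocity argument shows that the $[D]$-weight of $H^0(F, \mathcal{L})$ has $K$-dimension at most $r \cdot \deg P$ for any $G$-equivariant locally free sheaf $\mathcal{L}$ on $F$ of rank $r$. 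Applying this to $\mathcal{O}_F$ (rank one) and $I_F/I_F^2$ (rank $\codim_{\pi^{-1}(Y)} F = \dim Y$) produces $\nu_P(D) \leq \deg P \cdot (\dim Y + 1)$, which is at most $\deg P \cdot \dim \pi^{-1}(Y)$ since $\dim \pi^{-1}(Y) = \dim Y + (d-n) \geq \dim Y + 1$ for projective $X$. The main obstacle is this weight analysis: it requires Galois descent from $\bar K$ to extract the factor of $\deg P$, and care at those $Q$ where the stabilizer $G_Q$ is nontrivial, which is precisely the case when $P$ lies on the singular locus of $X$.
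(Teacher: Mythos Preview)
Your arguments for (1) and (2) are correct. For (1) you use a composition series of the Artinian module $H^0(Y_P,\mathcal O_X(D)|_{Y_P})$, whereas the paper base-changes to $\kappa(P)$ and splits $Y_P$ into $\deg P$ isomorphic pieces over the extension; both are valid, and yours is arguably more direct. Part (2) matches the paper's reasoning closely.

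For part (3) your route diverges sharply from the paper's, and although the strategy is plausible it is considerably harder than necessary and, as you yourself note, not fully worked out. The paper never tries to identify $I_P$ with the full intersection $\bigcap_Q \ker\vartheta_Q$, nor does it analyze weight multiplicities on the orbit via a conormal sequence. Instead it picks a \emph{single} closed point $Q\in\pi^{-1}(P)$ with $\deg Q=\deg P$ and observes that for $k\gg 0$ the graded piece $(S/I_P)_{[D+kE]}$ injects into $\mathcal O_{\pi^{-1}(Y),Q}/\mathfrak m_Q^2$, simply because $I_P\subseteq\ker\vartheta_Q$ and any homogeneous element of $\ker\vartheta_Q$ already lies in $I_P$ (this is Remark~(4) in \S\ref{SS:qsm}). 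The target has $K$-dimension $\deg P\cdot(\dim\pi^{-1}(Y)+1)$ since $\pi^{-1}(Y)$ is smooth at $Q$; the injection cannot be surjective because a single graded piece never exhausts the local ring, so $\nu_P(D)<\deg P\cdot(\dim\pi^{-1}(Y)+1)$, and the divisibility from (1) turns this into the stated non-strict bound. This entirely bypasses the Frobenius-reciprocity computation and the issues with non-reduced stabilizers in characteristic~$p$ that you flag. If your argument were carried through it would in fact yield the sharper bound $\nu_P(D)\le\deg P\cdot(\dim Y+1)$, but for the paper's purposes the weaker inequality suffices and comes essentially for free.
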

\begin{proof}
 Recall that in the proof of Lemma~\ref{L:qsm}, $Y_P$ was defined by the homogeneous ideal $I_P$, which was the largest homogeneous ideal contained in $\bigcap_{Q \in \pi^{-1}(P)} \ker(S \to \mathcal O_{\pi^{-1}(Y),Q}/\mathfrak m_Q^2)$.
\begin{enumerate}[(1)]
\item Let $\kappa(P)$ be the residue field of $P$. Since $K$ is perfect, the field extension $\kappa(P)/K$ is separable. Suppose that $P_1, \dots, P_{\deg P}$ are the $\deg P$ distinct points lying over $P$. Denote by $X'$, $Y'$ and $D'$ the respective base changes of $X$, $Y$ and $D$ to $\kappa(P)$. Then 
$$H^0(Y_P,\mathcal O_X(D)|_{Y_P}) \otimes_ K \kappa(P) \cong \bigoplus_{i=1}^{\deg P} H^0(Y'_{P_i}, \mathcal O_{X'}(D')|_{Y'_{P_i}}),$$
where all the direct summands on the right-hand side have the same dimension over $\kappa(P)$.
\item If $D$ is Cartier, then $\mathcal O_X(D)$ is locally free and hence
$$ H^0(Y_P, \mathcal O_X(D)|_{Y_P}) \cong H^0(Y_P, \mathcal O_{Y_P}).$$
Since the latter space is of positive dimension, (1) yields the estimate $\nu_P(D) \geq \deg P$. 
 \item  Let $\mathcal O_{Y,P}$ be the local ring of $Y$ at $P$ with maximal ideal $\mathfrak m_P$. Since $\mathcal O_X(D)$ is invertible when restricted to the smooth locus, we get a honest restriction map $\rho: S \to \mathcal O_{Y,P}$. Now
\begin{align*}
f \in I_P &\Leftrightarrow Y \cap V(f) \text{ quasismooth at } P  \\
&\Leftrightarrow Y \cap V(f) \text{ smooth at } P \\
&\Leftrightarrow \rho(f) \in \mathfrak m_P^2.
\end{align*}
Since $Y$ is smooth at $P$, the $\mathbb F_q$-dimension of $\mathcal O_{Y,P}/\mathfrak m_P^2$ equals $\deg P \cdot (\dim Y + 1)$.
\item Pick a point $Q \in \pi^{-1}(P)$ of the same degree as $P$. As the restriction map $\varphi_{Y_P,k}$ is eventually surjective for large enough $k$ by Lemma~\ref{L:surj1}, $H^0(Y_P, \mathcal O_X(D)|_{Y_P})$ has the same dimension as $(S/I_P)_{[D+kE]}$ for all $k \gg 0$. But the latter space injects into $\mathcal O_{\pi^{-1}(Y),Q}/\mathfrak m_Q^2$, which has dimension $\deg Q\cdot(\dim \pi^{-1}(Y) + 1)$, as $Y$ is smooth at $Q$. Since this injection cannot be surjective, $$\nu_P(D) < \deg Q \cdot (\dim \pi^{-1}(Y) + 1) = \deg P \cdot (\dim \pi^{-1}(Y) + 1).$$
By part (1), this implies $\nu_P(D) \leq \deg P \cdot \dim \pi^{-1}(Y)$. \qedhere
\end{enumerate}
\end{proof}

\begin{corollary}\label{C:surj}
Suppose that $P$ is a closed point of $Y$ and $k$ is a positive integer such that
$$ \deg P \leq \frac{k - \reg_E(D) + 1}{\dim \pi^{-1}(Y)}.$$
Then the map $\varphi_{Y_P,k}: H^0(X, \mathcal O_X(D+kE)) \to H^0(Y_P, \mathcal O_X(D)|_{Y_P})$ is surjective.
\end{corollary}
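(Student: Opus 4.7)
The plan is essentially to combine the two main results of the preceding subsections, Lemma~\ref{L:surj2} and Lemma~\ref{L:nuP}(3), applied to the zero-dimensional subscheme $Z = Y_P$.

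First, I would unwind the definition $\nu_P(D) = \dim_K H^0(Y_P, \mathcal O_X(D)|_{Y_P})$ so that the sufficient condition for surjectivity of $\varphi_{Y_P,k}$ given by Lemma~\ref{L:surj2} becomes
\[
k \;\geq\; \nu_P(D) + \reg_E(D) - 1.
\]
Next I would invoke Lemma~\ref{L:nuP}(3), which gives the uniform bound $\nu_P(D) \leq \deg P \cdot \dim \pi^{-1}(Y)$ in terms of data depending only on $P$ and $Y$, not on the scheme $Y_P$ itself.

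Putting these two together, it suffices to verify that
\[
\deg P \cdot \dim \pi^{-1}(Y) \;\leq\; k - \reg_E(D) + 1,
\]
which is exactly a rearrangement of the hypothesis
\[
\deg P \;\leq\; \frac{k - \reg_E(D) + 1}{\dim \pi^{-1}(Y)}
\]
in the statement of the corollary. (Note $\dim \pi^{-1}(Y) \geq 1$, so dividing is harmless; if $Y_P$ happened to be empty the statement is vacuous.) No genuine obstacle is expected: the work has already been done in Lemma~\ref{L:surj2} and Lemma~\ref{L:nuP}, and the corollary is just the clean arithmetic consequence of chaining those two bounds. The only minor care needed is to make sure the applicability of Lemma~\ref{L:nuP}(3) indeed requires no additional hypothesis on $P$ beyond being a closed point of $Y$, which is already part of the quasismoothness setting fixed at the beginning of Section~\ref{S:sections}.
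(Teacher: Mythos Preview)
Your proposal is correct and matches the paper's own proof essentially verbatim: the paper also chains the bound $\nu_P(D) \leq \deg P \cdot \dim \pi^{-1}(Y)$ from Lemma~\ref{L:nuP}(3) with the surjectivity criterion $k \geq \nu_P(D) + \reg_E(D) - 1$ from Lemma~\ref{L:surj2}, and then observes this is exactly the stated hypothesis on $\deg P$.
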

\begin{proof}
Lemma~\ref{L:nuP} gives the bound
$$\nu_P(D) \leq \deg P \cdot \dim \pi^{-1}(Y) \leq k - \reg_E(D) + 1.$$
Thus $\varphi_{Y_P,k}$ is surjective, as $k \geq \nu_P(D) + \reg_E(D) - 1$ due to Lemma~\ref{L:surj2}.
\end{proof}

\section{Sieving closed points}\label{S:sieve}

We are now in the shape to prove Theorem~\ref{T:main} following the method of Poonen. The notations are the same as in the previous section, except that we assume $K = \mathbb F_q$ to be a finite field.

\subsection{Low degree points}

\begin{lemma}[Low degree points]\label{L:low}
For $r \geq 1$, let $Y_{<r}$ be the set of closed points of $Y$ of degree less than $r$. Then there is a positive integer $k_r$ such that for all $k \geq k_r$ holds
\begin{align*}
  &\frac{\#\{ f \in H^0(X, \mathcal O_X(D+kE)) \mid Y \cap V(f) \text{ is quasismooth at all } P \in Y_{<r} \}}{\#H^0(X, \mathcal O_X(D+kE))} \\
   &\quad= \prod_{P \in Y_{<r}} \left( 1 - q^{-\nu_P(D)}\right).
\end{align*}
\end{lemma}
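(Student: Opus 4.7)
The plan is to bundle all of the pointwise quasismoothness conditions into a single surjective restriction map onto a finite $\mathbb F_q$-vector space, and then count by linear algebra. First, since $Y$ is of finite type over $\mathbb F_q$, the set $Y_{<r}$ is finite. For each $P \in Y_{<r}$ the scheme $Y_P$ from Lemma~\ref{L:qsm} is set-theoretically supported at $P$: its defining ideal $I_P \subseteq S$ contains homogeneous elements that vanish to order $\geq 2$ at $\pi^{-1}(P)$ but are nonzero at $\pi^{-1}(P')$ whenever $P' \neq P$. Hence the subschemes $\{Y_P\}_{P \in Y_{<r}}$ of $X$ are pairwise disjoint and assemble into a zero-dimensional subscheme
$$ Z_r := \bigsqcup_{P \in Y_{<r}} Y_P \;\subseteq\; X $$
with a canonical decomposition
$$ H^0(Z_r, \mathcal O_X(D)|_{Z_r}) \;\cong\; \bigoplus_{P \in Y_{<r}} H^0(Y_P, \mathcal O_X(D)|_{Y_P}), $$
an $\mathbb F_q$-vector space of dimension $\sum_{P \in Y_{<r}} \nu_P(D)$.

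Next I would invoke Lemma~\ref{L:surj2} applied to $Z_r$ to conclude that the combined restriction map
$$ \varphi_{Z_r, k}: H^0(X, \mathcal O_X(D+kE)) \longrightarrow \bigoplus_{P \in Y_{<r}} H^0(Y_P, \mathcal O_X(D)|_{Y_P}) $$
is surjective for every
$$ k \;\geq\; k_r := \sum_{P \in Y_{<r}} \nu_P(D) + \reg_E(D) - 1, $$
where I implicitly use that $\mathcal O_X(D+kE)|_{Y_P} \cong \mathcal O_X(D)|_{Y_P}$ because $E$ is Cartier, as noted at the start of Section~\ref{S:sections}.

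Finally, once $\varphi_{Z_r,k}$ is surjective, all of its fibers have the same cardinality, so the proportion of sections $f$ whose image lies in a prescribed subset of the target equals the density of that subset. By Lemma~\ref{L:qsm}, the event that $Y \cap V(f)$ is quasismooth at every $P \in Y_{<r}$ corresponds exactly to the image of $f$ having a nonzero component in each summand $H^0(Y_P, \mathcal O_X(D)|_{Y_P})$. The number of such targets equals $\prod_{P \in Y_{<r}}(q^{\nu_P(D)} - 1)$, and dividing by the total target size $\prod_{P \in Y_{<r}} q^{\nu_P(D)}$ yields the claimed product. The step I expect to require the most care is the set-theoretic disjointness of the $Y_P$, since this underlies both the decomposition of $H^0(Z_r, \mathcal O_X(D)|_{Z_r})$ and the statistical independence of the quasismoothness conditions at distinct points; once this is in hand the rest is elementary linear algebra over $\mathbb F_q$.
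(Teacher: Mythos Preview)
Your proof is correct and follows essentially the same route as the paper's: form the union $Z$ of the schemes $Y_P$, use surjectivity of $\varphi_{Z,k}$ for large $k$, and count fibers by linear algebra. The only minor differences are that the paper invokes Lemma~\ref{L:surj1} (plain Serre vanishing) rather than the sharper Lemma~\ref{L:surj2}, since an explicit value of $k_r$ is not needed here, and the paper silently takes the disjointness of the $Y_P$ for granted whereas you spell it out.
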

\begin{proof}
Let $Z$ be the union of all schemes $Y_P$ for $P \in Y_{<r}$. By Lemma~\ref{L:qsm}, a section $f \in H^0(X, \mathcal O_X(D+kE))$ is quasismooth at all $P \in Y_{<r}$ if and only if $\varphi_{Z,k}(f)$ vanishes nowhere, where $\varphi_{Z,k}$ denotes the composition
$$H^0(X, \mathcal O_X(D+kE)) \to H^0(Z, \mathcal O_X(D)|_{Z}) \cong \prod_{P \in Y_{<r}} H^0(Y_P, \mathcal O_X(D)|_{Y_P}).$$
According to Lemma~\ref{L:surj1}, there is a constant $k_r$ such that for all $k \geq k_r$, the map $\varphi_{Z,k}$ is surjective. The fibers of a surjective linear map between finite vector spaces have all the same cardinality, hence
\begin{align*}
  &\frac{\#\{ f \in H^0(X, \mathcal O_X(D+kE)) \mid Y \cap V(f) \text{ is quasismooth at all } P \in Y_{<r} \}}{\#H^0(X, \mathcal O_X(D+kE))} \\
   &\quad= \frac{\#\varphi_{k,Y}^{-1}\left(\prod_{P \in Y_{<r}} \left(H^0(Y_P, \mathcal O_X(D)|_{Y_P}) \setminus \{0\}\right) \right)}{\#\varphi_{k,Y}^{-1}\left(\prod_{P \in Y_{<r}} H^0(Y_P, \mathcal O_X(D)|_{Y_P})  \right)}\\
      &\quad= \prod_{P \in Y_{<r}} \left( 1 - \frac{1}{\# H^0(Y_P, \mathcal O_X(D)|_{Y_P})}\right) \\
   &\quad= \prod_{P \in Y_{<r}} \left( 1 - q^{-\nu_P(D)}\right).
\end{align*}
\end{proof}

\begin{corollary}\label{C:low}
 If $\nu_P(D) = 0$ for some closed point $P$ of $Y$, then $Y \cap V(f)$ is not quasismooth at $P$ for all $f \in H^0(X, \mathcal O_X(D+kE))$ and all $k \geq 0$.
\end{corollary}
\begin{proof}
 Let $P \in Y$ be a closed point with $\nu_P(D) = 0$. In particular, $H^0(Y_P, \mathcal O_X(D)|_{Y_P}) = 0$. Then the map $\varphi_{Y_P,k}$ is surjective for all $k \geq 0$ for trivial reasons. Repeating the computation in the proof of Lemma~\ref{L:low} above shows that 
  $$\frac{\#\{ f \in H^0(X, \mathcal O_X(D+kE)) \mid Y \cap V(f) \text{ is quasismooth at } P \}}{\#H^0(X, \mathcal O_X(D+kE))} = 0.$$
\end{proof}

\begin{example}\label{E:wps1}
It can happen that $\nu_P(D) = 0$. In this case, Corollary~\ref{C:low} states that no section in $H^0(X, \mathcal O_X(D+kE))$ has quasismooth intersection with $Y$.

For example, consider the $n$-di weighted projective space
$$X = Y = \mathbb P(1,\dots,1,w)$$
of dimension $n$, where $w \geq 3$. $X$ has homogeneous coordinate ring $S = \mathbb F_q[x_0,\dots,x_n]$, and the grading by the class group $\Cl(X) \cong \mathbb Z$ is given by $\deg(x_i)=1$ for $i = 0, \dots, n-1$ and $\deg(x_n) = w$. Choose a Weil divisor $D_\ell$ corresponding to $\mathcal O_X(\ell)$, where $\ell \in \{0, \dots, w - 1 \}$. This is not Cartier if $\ell \neq 0$. However, the sheaf $\mathcal O_X(w)$ is ample and invertible. The only singular point of $X$ is $P = (0:\dots:0:1)$ in weighted homogeneous coordinates. All other points $Q$ have $\nu_Q(D_\ell) = \deg Q \cdot (n+1)$ by Lemma~\ref{L:nuP}.

We want to compute $\nu_P(D_{\ell})$. Write $f \in H^0(X, \mathcal O_X(kw+\ell))$ as
$$ f = \sum_{i=0}^k x_n^i \cdot f_i(x_0,\dots,x_{n-1}), \quad f_i \text{ homogenous of degree } (k-i)w+\ell.$$
If $\ell = 1$, then $f(P)$ = 0, and $f$ is not quasismooth at $P$ if and only if $f_k = 0$. As $f_k$ is a linear homogeneous polynomial in $n$ variables, this is a codimension $n$ condition, thus $\nu_P(D_1) = n$, compare Example~\ref{E:wps0}. With a similar computation, one obtains that $\nu_P(D_0) = 1$. However, if $\ell \geq 2$, then $f$ and all its partial derivatives automatically vanish at $P$. So the surjective map $\varphi_{Y_P,k}$ is the zero map, and consequently $\nu_P(D_\ell) = 0$.
\end{example}

\subsection{Medium degree points}

As we have seen in the previous example, we want to avoid low values of $\nu_P(D)$. For $m \geq 0$, define
$$\beta_m := \dim\, \{P \in Y \text{ closed} \mid \nu_P(D) = m \deg P \}.$$

\begin{lemma}[Medium degree points]\label{L:medium}
Fix an integer $r \geq 1$ and let $c$ be the constant from Lemma~\ref{L:surj2}. Let $Y_{r,sk}$ be the set of closed points $P$ of $Y$ with $r \leq \deg P \leq sk$, where
$$s := \frac{1}{\reg_E(D) \cdot \dim \pi^{-1}(Y)}.$$
 \begin{enumerate}[\normalfont (1)]
  \item If $\beta_m < m$ for all $m = 0, \dots, \dim Y$, then
\begin{align*}
  &\lim_{r \to \infty} \lim_{k \to \infty} \frac{\#\left\{ f \in H^0(X, \mathcal O_X(D+kE)) \middle| \begin{array}{c}
            Y \cap V(f) \text{ is not quasismooth}\\\text{at some } P \in Y_{r,sk}                                                                                         \end{array}
\right\}}{\#H^0(X, \mathcal O_X(D+kE))} = 0.
\end{align*}
 \item Otherwise
\begin{align*}
  \lim_{k \to \infty} \frac{\#\left \{ f \in H^0(X, \mathcal O_X(D+kE)) \middle| \begin{array}{c}
            Y \cap V(f) \text{ is not quasismooth}\\\text{at some } P \in Y_{r,sk}                                                                                         \end{array}
            \right \}}{\#H^0(X, \mathcal O_X(D+kE))} = 1.
\end{align*}
 \end{enumerate}
\end{lemma}
\begin{proof} \
\begin{enumerate}
 \item Let $k$ be a positive integer such that $k \geq \ell := \reg_E(D)$. Then we have the inequalities $k \cdot (1-\ell) \leq \ell \cdot (1-\ell)$ and thus
 $$k \leq k \ell - \ell^2 + \ell = \ell \cdot (k-\ell+1).$$
Hence, for $P \in Y_{r,sk}$,
$$ \deg P \leq \frac{k}{\ell \cdot \dim \pi^{-1}(Y)} \leq \frac{k-\ell+1}{\dim \pi^{-1}(Y)},$$
so the map $\varphi_{Y_P,k}$ is surjective by Corollary \ref{C:surj}. Following the proof of Lemma~\ref{L:low}, one finds that
$$\frac{\#\left\{ f \in H^0(X, \mathcal O_X(D+kE)) \middle|
\begin{array}{c}
 Y \cap V(f) \text{ is not}\\\text{quasismooth at }P                                                                                     \end{array} 
 \right \} }{\#H^0(X, \mathcal O_X(D+kE))}
 = q^{-\nu_P(D)}.$$
Hence we get the estimate
\begin{align*}
 &\frac{\#\left\{ f \in H^0(X, \mathcal O_X(D+kE)) \middle|
\begin{array}{c}
 Y \cap V(f) \text{ is not quasismooth}\\\text{at some } P \in Y_{r,sk}                                                                                       \end{array} 
 \right \} }{\#H^0(X, \mathcal O_X(D+kE))}\\
&\leq \sum_{e=r}^{sk} \sum_{P \in Y :\, \deg P = e} q^{-\nu_P(D)} \\
&\leq\sum_{e=r}^{sk} \sum_{m = 0}^{\dim \pi^{-1}(Y) } \sum_{P \in Y :\, \deg P = e, \nu_P(D) = em} q^{-em}, \quad k \geq \ell.
\end{align*}
Using the Lang-Weil bound \cite{LW}*{Theorem~1}, we can find a constant $L > 0$ such that 
$$\#\{P \in Y \mid \deg P = e, \nu_P(D) =  em\} \leq Lq^{e\beta_m}.$$
Hence
\begin{align*}
&\frac{\#\left\{ f \in H^0(X, \mathcal O_X(D+kE)) \middle|
\begin{array}{c}
 Y \cap V(f) \text{ is not quasismooth}\\\text{at some } P \in Y_{r,sk}                                                                                       \end{array} 
 \right \} }{\#H^0(X, \mathcal O_X(D+kE))}\\
&\leq \sum_{e=r}^{sk} \sum_{m = 0}^{\dim \pi^{-1}(Y) } Lq^{-e(m-\beta_m)}
\leq \sum_{m = 0}^{\dim \pi^{-1}(Y) } \sum_{e\geq 0} Lq^{-(e+r)(m-\beta_m)}\\
&=\sum_{m = 0}^{\dim \pi^{-1}(Y) } Lq^{-r(m-\beta_m)} \frac{1}{1-q^{\beta_m-m}}, \quad k \geq \ell.
\end{align*}
If $\beta_m < m$, this becomes arbitrarily small as $r \to \infty$.
\item Otherwise, choose an integer $m \in \{0, \dots, \dim Y\}$ and a subscheme $Z \subseteq Y$, $\dim Z \geq m$, such that for every closed point $P \in Z$ holds $\nu_P(D) = m \deg P$. For any integer $t \geq 0$, denote by $Z_{r,t}$ the finite set of closed points of $Z$ whose degree lies between $r$ and $t$. Further define for integers $k, t \geq 0$ the rational number
$$\quad a_{k,t} := \frac{\#\left\{ f \in H^0(X, \mathcal O_X(D+kE)) \middle|
\begin{array}{c}
 Y \cap V(f) \text{ is not quasismooth}\\\text{at some } P \in Z_{r,t}                                                                                       \end{array} 
 \right \} }{\#H^0(X, \mathcal O_X(D+kE))}.$$
By the techniques of Lemma~\ref{L:low},
\begin{align*}\lim_{k \to \infty} a_{k,t} &= 1 - \prod_{P \in Z_{r,t} \text{ closed}} \left(1 - q^{-\nu_P(D)} \right) \\
&= 1 - \prod_{P \in Z_{r,t} \text{ closed}} \left(1 - q^{-m \deg P} \right) \\
&= 1 - \prod_{P \in Z_{<r}}  \left(1 - q^{-m \deg P} \right)^{-1} \cdot \prod_{P \in Z_{\leq t}}  \left(1 - q^{-m \deg P} \right).
\end{align*}
The latter product vanishes if $m = 0$. Otherwise, we can use the standard power series expansion for the Hasse-Weil zeta function to obtain
$$  \prod_{P \in Z_{\leq t}} \left(1 - q^{-m \deg P} \right) = \exp \left(- \sum_{e=1}^t \#Z(\mathbb F_{q^e}) \frac{q^{-me}}{e} \right).$$
The Lang-Weil estimate \cite{LW}*{Theorem~1} gives a constant $M > 0$ depending on $Z$ such that $\#Z(\mathbb F_{q^e}) \geq Mq^{e \dim Z}$. Since $\dim Z \geq m$, the sum inside the exponential diverges to $\infty$ and therefore
$$ \lim_{t \to \infty} \lim_{k \to \infty} a_{k,t} = 1.$$
Let $\varepsilon > 0$. Then there is a number $t_\varepsilon$ such that 
$$ 1 - \varepsilon \leq \lim_{k \to \infty} a_{k,t_\varepsilon}.$$
Using the obvious inequality $a_{k,t_\varepsilon} \leq a_{k,sk}$ whenever $k \geq \frac{t_\varepsilon}{s}$ shows
$$  1 - \varepsilon  \leq \lim_{k \to \infty} a_{k,t_\varepsilon} \leq \underset{k \to \infty}{\lim\inf}\, a_{k,sk} \leq 1,$$
which completes the proof. \qedhere
\end{enumerate}
\end{proof}

\begin{remark}\label{R:medium}
The condition $\beta_m < m$ is automatically satisfied if $Y$ is smooth. It is still true if $Y$ has only finitely many singularities, provided that no point $P$ has $\nu_P(D) = 0$. We have already seen in Corollary~\ref{C:low} that the latter condition is necessary for having quasismooth intersections at all.
\end{remark}

\begin{example}\label{E:wps2}
Besides Example~\ref{E:wps1}, another example where the second case of Lemma~\ref{L:medium} applies is given by the following: Consider the weighted projective space $X = Y = \mathbb P(1,2,3,6)$ with coordinates $x_0, x_1, x_2, x_3$. Pick divisors $D$ and $E$ such that $\mathcal O_X(D) \cong \mathcal O_X(1)$ and $\mathcal O_X(E) \cong \mathcal O_X(6)$.

One computes that $\nu_P(D) = 1$ for any point $P \in V(x_0, x_1)$, thus $\beta_1 \geq \dim V(x_0,x_1) = 1$. In contrast to Example~\ref{E:wps1}, there is no point $P \in \mathbb P(1,2,3,6)$ with $\nu_P(D) = 0$. However, the hypersurfaces of degree $6k+1$ which are not quasismooth at some point in $V(x_0,x_1)$ still form a set of density one by Lemma~\ref{L:medium}~(2).
\end{example}

\subsection{High degree points}

We need two preparatorial lemmas.

\begin{lemma}\label{L:enough} Let $\ell := \reg_E(D)$.
\begin{enumerate}[\normalfont (1)]
\item Suppose that $X$ is smooth at the closed point $P$. Then, for $k \geq \ell$,
$$ \frac{\#\{f \in H^0(X, \mathcal O_X(D+kE)) \mid f(P) = 0 \}}{\# H^0(X, \mathcal O_X(D+kE))} \leq q^{-\min(k-\ell, \deg P)}.$$
\item Let $V \subseteq X$, $\dim V \geq 1$, be a subscheme which intersects the singular locus of $X$ in finitely many points only. Then
$$ \frac{\#\{ f \in H^0(X, \mathcal O_X(D+kE)) \mid V \subseteq \{f = 0\} \}}{\# H^0(X, \mathcal O_X(D+kE))} \leq q^{\ell-k}.$$
\end{enumerate}
\end{lemma}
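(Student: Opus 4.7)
The plan for part~(1) is to track the image of evaluation at $P$ as $k$ grows. Since $X$ is smooth at $P$, the reflexive rank-one sheaves $\mathcal O_X(D)$ and $\mathcal O_X(E)$ are both locally free there; after fixing local generators I obtain well-defined evaluation maps $\mathrm{ev}_k \colon H^0(X, \mathcal O_X(D+kE)) \to \kappa(P)$, and I set $V_k := \im \mathrm{ev}_k$. The fraction in question equals $q^{-\dim_{\mathbb F_q} V_k}$, so the task reduces to the lower bound $\dim_{\mathbb F_q} V_k \ge \min(k-\ell+1,\deg P)$, which is at least the quantity $\min(k-\ell,\deg P)$ demanded by the statement.

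The central observation is that for $k \ge \ell$ Lemma~\ref{L:multiplication} yields $V_{k+1} = \Sp_{\mathbb F_q}(V_k \cdot U)$, where $U := \im(H^0(X, \mathcal O_X(E)) \to \kappa(P))$. Applying Lemma~\ref{L:surj1} to the reduced subscheme $\{P\}$ forces $V_m = \kappa(P)$ for all sufficiently large $m$, and a short induction on this using the multiplication relation rules out both $V_\ell = 0$ and $U = 0$. The key step is to show $\dim V_{k+1} > \dim V_k$ whenever $V_k \subsetneq \kappa(P)$. The plan is a stabilizer argument: if $\dim V_{k+1} = \dim V_k$, then $V_{k+1} = uV_k$ for every nonzero $u \in U$ (since $V_{k+1} \supseteq uV_k$ has the same $\mathbb F_q$-dimension), so for any two nonzero $u,u' \in U$ the quotient $u^{-1}u'$ lies in the stabilizer $F := \{x \in \kappa(P) : xV_k \subseteq V_k\}$, which is an $\mathbb F_q$-subalgebra of the field $\kappa(P)$ and hence a subfield. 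Iterating this identity gives $V_{k+m} = u^m V_k$ of constant $\mathbb F_q$-dimension, contradicting $V_{k+m} = \kappa(P)$ for $m \gg 0$. Strict growth in each step up to $\deg P$ then produces the bound.

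For part~(2), I will reduce to part~(1) by a well-placed test point. Since $\dim V \ge 1$ over $\mathbb F_q$, the scheme $V$ has closed points of arbitrarily large degree, and since $V \cap \operatorname{Sing}(X)$ is finite, all but finitely many of these lie in the smooth locus of $X$. I will pick such a point $P \in V$ with $\deg P \ge k - \ell$. The inclusion $\{f : V \subseteq V(f)\} \subseteq \{f : f(P) = 0\}$ together with part~(1) then yields $q^{-\min(k-\ell,\deg P)} = q^{\ell-k}$.

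The delicate step I anticipate is the strict-increase claim in part~(1). The subtlety is that $V_{k+1}$ need not contain $V_k$ as a subset: multiplying by a nonzero $u \in U$ only yields $\dim V_{k+1} \ge \dim V_k$, not $V_{k+1} \supseteq V_k$. Promoting this to strict growth is exactly what the stabilizer field argument achieves, using the eventual surjectivity from Lemma~\ref{L:surj1} as the driving engine that forbids the chain from stalling below $\kappa(P)$.
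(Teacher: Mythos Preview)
Your argument is correct, and for part~(2) it coincides with the paper's. In part~(1) the paper takes a somewhat shorter route: rather than proving strict growth of the $V_k$ from scratch, it invokes the filtration $B_{-1}\subseteq B_0\subseteq\cdots$ already set up in the proof of Lemma~\ref{L:surj2}. There one first base-changes so that a section $f_0\in H^0(X,\mathcal O_X(E))$ with $f_0(P)\neq 0$ exists; in your language this amounts to arranging $1\in U$, after which the containments $V_k\subseteq V_{k+1}$ are automatic and ``stationary implies full'' follows immediately from the eventual surjectivity. Your stabilizer/iteration trick $V_{k+m}=u^mV_k$ is a valid substitute that avoids the base change and keeps the proof self-contained, at the price of a slightly longer argument. (A minor remark: the subfield $F$ you introduce is never actually used---the identity $V_{k+1}=uV_k$ for any nonzero $u\in U$ and its iterate already yield the contradiction.)
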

\begin{proof}
Let $Z$ be the closed subscheme corresponding to the maximal ideal at $P$. Since $X$ is smooth at $P$, we have $H^0(Z, \mathcal O_X(D)|_Z) \cong H^0(Z, \mathcal O_Z)$, and the $\mathbb F_q$-dimension of this vector space equals $\deg P$. Assuming w.l.o.g. that $f_0(P) \neq 0$, the proof of Lemma~\ref{L:surj2} shows that the dimension of the image of the evaluation map
$$S_{[D+\ell  E]} \otimes \mathbb F_q[f_1,\dots,f_s]_{\leq k-\ell} \to H^0(X, \mathcal O_X(D+kE)) \xrightarrow{\varphi_{Z,k}} H^0(Z,\mathcal O_Z)$$
is at least $\min(k-\ell, \deg P)$. This proves (1). For (2), pick a point $P \in V$ contained in the smooth locus of $X$ such that $\deg P \geq k - \ell$.
\end{proof}
Note that the condition on smoothness is essential: Examples~\ref{E:wps1} and \ref{E:wps2} indicate that the fractions in question can be equal to one in the non-smooth case.

We need one more technical result. Let $W$ be a Weil divisor on $X$ and let $f \in S_{[W]}$ be a homogeneous polynomial of degree $[W]$ with respect to the grading given by the class group $\Cl(X)$. Since $S_{[W]} \subseteq \mathbb F_q[x_1,\dots,x_d]$, the polynomial $f$ carries a degree $\deg_\mathrm{std}(f)$ with respect to the standard grading on the polynomial ring $\mathbb F_q[x_1,\dots,x_d]$. Define
$$ \delta(W) := \max \,\{ \deg_\mathrm{std}(f) \mid f \in S_{[W]} \}.$$
\begin{lemma}\label{L:degree}
The quantity $\delta(D+kE)$ grows linearly in $k$.
\end{lemma}
\begin{proof}
 By Lemma~\ref{L:multiplication}, the natural multiplication map
$$ S_{[D+\ell E]} \otimes S_{[E]}^{\otimes (k-\ell)} \to S_{[D+kE]} $$
is surjective for $k \geq \ell := \reg_E(D)$. Consequently,
$$\delta(D+kE) = \delta(D+\ell  E) + (k-\ell) \cdot \delta(E), \quad k \geq \ell.$$
In particular, $\delta(D+kE)$ grows linearly in $k$.
\end{proof}

\begin{lemma}[High degree points]\label{L:high}
Fix a rational number $s > 0$ and denote by $Y_{>sk}$ the set of closed points of $Y$ of degree $> sk$. Suppose that $Y$ meets the singular locus of $X$ only in finitely many points. Then
\begin{align*}
  \underset{k \to \infty}{\lim\sup}\, \frac{\#\left\{ f \in H^0(X, \mathcal O_X(D+kE)) \middle|
\begin{array}{c}
 Y \cap V(f) \text{ is not quasismooth}\\\text{at some } P \in Y_{>sk}                                                                                       \end{array} 
 \right \} }{\#H^0(X, \mathcal O_X(D+kE))} = 0.
\end{align*}
\end{lemma}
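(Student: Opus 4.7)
My plan is to adapt Poonen's closed-point sieve to the toric setting, using Lemmas~\ref{L:enough} and~\ref{L:degree} as the principal inputs. First, I would reduce to the smooth locus of $X$: since $Y$ meets the singular locus of $X$ in only finitely many closed points, all of bounded degree, we have $Y_{>sk} \subseteq Y \cap X^{\mathrm{sm}}$ for $k$ sufficiently large, and on the smooth locus quasismoothness coincides with smoothness. I would then cover the smooth locus by the finitely many affine toric charts $U \cong \mathbb A^n$ arising from smooth cones of $\Sigma$; on each, trivialize $\mathcal O_X(D+kE)$ so that sections become polynomials of total degree at most $\delta(D+kE) = O(k)$ by Lemma~\ref{L:degree}. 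It suffices to treat one such chart $U$, with coordinates $(t_1,\dots,t_n)$ and, after permuting coordinates, local parameters $t_1,\dots,t_m$ on $Y \cap U$, where $m := \dim Y$.

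Next, I would apply Poonen's derivative trick. Via Lemma~\ref{L:multiplication}, decompose a generic section as
\[
    f = f_0 + h_1 g_1 + \cdots + h_m g_m,
\]
with fixed auxiliary sections $h_i \in S$ (restricting on $U$ to $t_i$, or to suitable $p^e$-th powers in positive characteristic so that unwanted partial derivatives vanish), while $(f_0, g_1, \dots, g_m)$ vary independently through appropriately graded subspaces of $S$. A closed point $P \in Y \cap U$ at which $Y \cap V(f)$ fails to be smooth must satisfy $f(P) = 0$ and $\partial_{t_i}(f|_Y)(P) = 0$ for $i = 1,\dots,m$, which in this decomposition translate into $m+1$ linear conditions on the independent evaluations $f_0(P), g_1(P), \dots, g_m(P)$. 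Lemma~\ref{L:enough}(1) bounds each of these conditions by $q^{-\min(k-\ell,\,\deg P)}$, where $\ell := \reg_E(D)$, so
\[
    \Pr\bigl[Y \cap V(f)\text{ singular at }P\bigr] \leq q^{-(m+1)\min(k-\ell,\,\deg P)}.
\]

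Summing this over all closed points $P$ of $Y$ of moderate degree $sk < \deg P \leq k-\ell$, and using the Lang--Weil bound $\#\{P : \deg P = e\} \leq C q^{em}/e$ already invoked in Lemma~\ref{L:medium}, the total probability is bounded by $\sum_{sk < e \leq k-\ell} (Cq^{em}/e) \cdot q^{-(m+1)e} = O(q^{-sk})$, which tends to $0$ as $k \to \infty$.

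The main obstacle lies in the tail $\deg P > k - \ell$, where the per-point probability bound saturates at $q^{-(m+1)(k-\ell)}$ while the count of points of degree $e$ grows like $q^{em}$, so a naive union bound diverges. To circumvent this, I would argue that for all but a negligible set of $f$ the non-smooth locus $\Sigma_f \subseteq U \cap Y$ is itself zero-dimensional: the exceptional event, in which $\Sigma_f$ has positive dimension and hence $f$ vanishes on a positive-dimensional subscheme of $Y$, has density at most $q^{\ell-k}$ by Lemma~\ref{L:enough}(2). When $\Sigma_f$ is zero-dimensional, applying Bezout to the system of polynomial equations defining it (each of degree $O(k)$ by Lemma~\ref{L:degree}) gives $\deg \Sigma_f = O(k^{m+1})$, so $\Sigma_f$ contains at most $O(k^{m+1}/(k-\ell)) = O(k^m)$ closed points of degree exceeding $k - \ell$. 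The bad $f$ in this regime therefore have density at most $O(k^m q^{-(m+1)(k-\ell)}) \to 0$, completing the argument. This Bezout-plus-codimension step is the essential new ingredient beyond the ``medium degree'' treatment of Lemma~\ref{L:medium}.
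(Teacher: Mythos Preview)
Your reduction to the smooth locus and the union bound over moderate degrees $sk < \deg P \le k-\ell$ are fine; the genuine gap is in the tail $\deg P > k-\ell$, where both steps of your workaround fail.

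First, Lemma~\ref{L:enough}(2) does not show that $\{f : \dim\Sigma_f \ge 1\}$ has small density. That lemma bounds the density of $f$ vanishing on a \emph{fixed} positive-dimensional $V$, whereas your $V$ would be (a component of) $\Sigma_f$, which varies with $f$. Your phrasing ``$f$ vanishes on a positive-dimensional subscheme of $Y$'' is in fact satisfied by \emph{every} $f$ once $\dim Y \ge 2$, since $V(f)\cap Y$ is such a subscheme. The statement you want is Corollary~\ref{C:positive}, but that is deduced \emph{from} the present lemma, so invoking it here is circular.

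Second, even granting that $\Sigma_f$ is finite, the bound $O(k^m\,q^{-(m+1)(k-\ell)})$ conflates two incompatible quantities: the B\'ezout count $|\Sigma_f| = O(k^{m+1})$ is a statement about each individual $f$, while the factor $q^{-(m+1)(k-\ell)}$ is a probability over random $f$. Once $f$ has been drawn (so that $\Sigma_f$ is known) there is no randomness left to spend on its points, so you cannot multiply these.

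The paper avoids this by reserving independent pieces of randomness and spending them \emph{sequentially}. One writes
\[
f \;=\; f_0 \;+\; \sum_{i=1}^{m} g_i^{\,q}\,t_i \;+\; h^{q},
\]
so that $\partial_i f = \partial_i f_0 + g_i^{\,q}$ depends only on $f_0$ and $g_i$, and sets $W_i := \{\partial_1 f=\cdots=\partial_i f=0\}$. Conditioned on $f_0,g_1,\dots,g_i$ with $\dim W_i \le m-i$, B\'ezout bounds the number of $(m-i)$-dimensional components of $W_i$ by $O(k^i)$; each such component $V$ is now \emph{fixed}, so Lemma~\ref{L:enough}(2) legitimately gives that a random $g_{i+1}$ fails to cut $V$ down with probability $O(q^{-\lfloor k/q\rfloor})$. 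After $m$ steps $W_m$ is finite of size $O(k^m)$, and the still-unused $h$ handles its high-degree points via Lemma~\ref{L:enough}(1). The missing idea in your sketch is precisely this inductive conditioning, which is what lets B\'ezout and the vanishing bound be combined without circularity; note also that your decomposition lacks the extra term $h^q$ needed for this last step.
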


\begin{proof}
We divide the proof into six steps. The strategy is as follows: We give first a global proof for $X = Y$. We choose an open cover of $X$ such that on each open, a hypersurface fails to be quasismooth if $\dim X$ many derivations vanish. Then we draw sections of $H^0(X, \mathcal O_X(D+kE))$ uniformly at random and compute that the probability that the locus where all derivations vanish contains a point of high degree. Applying Poonen's trick of decoupling derivatives, we show that this probability becomes arbitrarily small as $k \to \infty$. The last step is to generalize the proof to arbitrary quasismooth subschemes $Y \subseteq X$ with finitely many singular points.

\begin{step}
Testing quasismoothness with $n := \dim X$ many derivations.
\end{step}

Let $f \in S = K[x_1,\dots,x_d]$ be homogeneous with respect to the $\Cl(X)$-grading. Then, by the definition of quasismoothness, $V(f)$ is not quasismooth at $P \in X$ if and only if $$f(P) = \frac{\partial f}{\partial x_1}(P) = \dots = \frac{\partial f}{\partial x_d}(P) = 0.$$
In fact, even more is true: Let $\sigma \in \Sigma$ be an $n$-dimensional cone in the simplicial fan $\Sigma$ associated to $X$. The homogeneous coordinate ring $S$ has a variable $x_i$ for each one-dimensional cone $\rho_i \in \Sigma$, where $i = 1, \dots, d$. Define $U_\sigma \subseteq X$ to be the open affine subvariety given by the homogeneous localization at $\prod_{\rho_i \not\subseteq \sigma} x_i$. Renumbering the variables, we can assume that $\prod_{\rho_i \not\subseteq \sigma} x_i = x_{n+1} \cdots x_d$. By \cite{CoxQsm}*{Lemma~3.6}, if $P \in U_\sigma$, then $V(f)$ is not quasismooth at $P \in X$ if and only if 
$$f(P) = \frac{\partial f}{\partial x_1}(P) = \dots = \frac{\partial f}{\partial x_n}(P) = 0.$$
$X$ can be covered with finitely many such sets $U_\sigma$, and quasismoothness may be tested with $\dim X$ many derivations on each $U_\sigma$. So we may w.l.o.g. restrict our search for non-quasismooth points of high degree to $U_\sigma = \{x_{n+1} \dots x_d \neq 0\} \subseteq X$.
\begin{step}
Drawing sections at random.
\end{step}

Let $D_i$ be the divisor corresponding to $V(x_i)$, so that $x_i$ is a global section of $ \mathcal O_X(D_i)$, $i = 1, \dots, n$. Set $D_0 := 0 \in \mathrm{Div}(X)$. For $i = 0, \dots, n$ and $b = 0, \dots, q-1$, pick a divisor $\widetilde{C_{i,b}}$ such that $q \cdot \widetilde{C_{i,b}} \leq D+bE - D_i$, where $q$ is the cardinality of the ground field $\mathbb F_q$. Now fix an integer $k \geq 1$ and write $k = \lfloor k/q \rfloor \cdot q + b$. Define $C_i := \widetilde{C_{i,b}} + \lfloor k/q \rfloor \cdot E$. There is a natural multiplication map
$$H^0(X, \mathcal O_X(C_i)) \to H^0(X, \mathcal O_X(D+kE-D_i)), \quad g \mapsto g^q.$$
In order to see this, choose $g \in H^0(X, \mathcal O_X(C_i))$. Then
$$\mathrm{div}(g^q) = q \cdot \mathrm{div}(g) \geq q \cdot (- C_i) \geq -q \left\lfloor \frac{k}{q} \right\rfloor E -(D+bE-D_i) = -(D+kE-D_i),$$
hence $g^q \in H^0(X, \mathcal O_X(D+kE-D_i))$.
Note that for all $g \in H^0(X, \mathcal O_X(C_i))$,
$$ \frac{\partial g^q}{\partial x_j} = 0, \quad i = 0, \dots, n, \quad j = 1, \dots, n. $$
Combine these maps to
\begin{align*}
  \psi:
  \begin{array}{c}
    H^0(X, \mathcal O_X(D+kE))\\ 
   \oplus \\
    \bigoplus_{i=1}^n H^0(X, \mathcal O_X(C_i))\\
    \oplus \\
    H^0(X, \mathcal O_X(C_0))
  \end{array}
  &\to   H^0(X, \mathcal O_X(D+kE)),\\
  (f_0, g_1, \dots, g_n, h) &\mapsto f_0 + \sum_{i=1}^n g_i^q \cdot x_i +  h^q. 
\end{align*}
This map is $\mathbb F_q$-linear and surjective, hence we can compute densities on the left-hand side.

\begin{step}Decoupling of derivatives.\end{step}

For $f = \psi(f_0,g_1,\dots,g_n,h)$, define the subsets
$$ W_i := \left\{\frac{\partial f}{\partial x_1} = \dots = \frac{\partial f}{\partial x_i} = 0\right\} \subseteq  X \cap \{x_{n+1} \cdots x_d \neq 0\},\quad i = 0, \dots, n.$$
Note that $W_0$ is $n$-dimensional and for $i \geq 0$, $W_i$ does not depend on $g_{i+1}, \dots, g_n$ and $h$: Indeed, we have that
\begin{align*}
\frac{\partial f}{\partial x_i} &= \frac{\partial f_0}{\partial x_i} + \sum_{j=1}^m \frac{\partial x_j}{\partial x_i} \cdot g_j^q + \sum_{j=1}^m \underset{=0}{\underbrace{\frac{\partial g_j^q}{\partial x_i}}} \cdot x_j + \underset{=0}{\underbrace{\frac{\partial h^q}{\partial x_i}}}\\
&=  \frac{\partial f_0}{\partial x_i} + g_i^q, \quad i = 1, \dots, n.
\end{align*}

\begin{step} For $0 \leq i \leq n-1$, conditioned on a choice of $f_0, g_1, \dots, g_i$ for which $\dim W_i \leq n-i$, the probability that $\dim W_{i+1} \leq n-i-1$ is $1-o(1)$ as $k \to \infty$.
\end{step}

There is nothing to show if $\dim W_i \leq n-i-1$. Otherwise, if $\dim W_i = n - i$, the number of $(n-i)$-dimensional $\mathbb F_q$-irreducible components of $W_i$ is bounded from above by the number of $(d-i)$-dimensional $\mathbb F_q$-irreducible components of $\pi^{-1}(W_i)$, where $\pi: \mathbb A^d \setminus B \to X$ is the quotient map. Applying B\'ezout's theorem for affine space, this quantity is bounded by $O(\delta^i)$, where $\delta = \deg_\mathrm{std}(f)$ is the degree of $f \in \mathbb F_q[x_1,\dots,x_d]$ with respect to the standard grading.

Let $V$ be such an $(n-i)$-dimensional component of $W$. Define
$$ G_V^\text{bad} := \left\{ g_{i+1} \in H^0(X, \mathcal O_X(C_{i+1})) \left|\, V \subseteq \left\{\frac{\partial\psi(f_0,g_1,\dots,g_{i+1},*)}{\partial x_{i+1}} = 0 \right\} \right. \right\}.$$
Suppose that $G_V^\text{bad} \neq \emptyset$. If $g,g' \in G_V$, then $g^q-(g')^q = (g-g')^q$
 vanishes identically on $V \subseteq W_i$. So $g-g'$ must vanish identically on $V$. Hence there is a bijection
 $$ G_V^\text{bad} \leftrightarrow \{ g \in H^0(X, \mathcal O_X(C_{i+1})) \mid  V \subseteq \{g = 0 \} \}.$$
 Recall that $C_{i+1} = \widetilde{C_{i+1,b}} + \lfloor k/q \rfloor \cdot E$, where $k = \lfloor k/q \rfloor \cdot q + b$. Using Lemma~\ref{L:enough},
$$ \frac{\#G_V^\text{bad}}{\# H^0(X, \mathcal O_X(C_{i+1}))} = O(q^{-\lfloor k/q \rfloor}).$$
Since there are at most $O(\delta^i)$ such components $V$, and this number grows like $O(k^i)$ by Lemma~\ref{L:degree}, the probability that $W_{i+1}$ has dimension greater than $n-i-1$ is
$$ O(k^i q^{-\lfloor k/q \rfloor}) = o(1) \quad \text{ as } k \to \infty.$$

\begin{step}
Conditioned on a choice of $f_0, g_1, \dots, g_n$ for which $W_n$ is finite, the probability that $W_n \cap \{f = 0\}$ contains a point of degree $>sk$ is $o(1)$ as $k \to \infty$.
\end{step} 
We can follow the lines of the previous step: There is nothing to show if $W_n$ is empty. Otherwise, the number of points in $W_n$ is $O(k^n)$ again by B\'ezout's theorem and Lemma~\ref{L:degree}. Pick $P \in W_n$ and let
$$H_P^\text{bad} := \{h \in H^0(X, \mathcal O_X(C_0)) \mid \psi(f_0,g_1,\dots,g_n,h)(P) = 0\}.$$
Another application of Lemma~\ref{L:enough} yields that for all large enough $k$, either
$$ \frac{\#H_P^\text{bad}}{\# H^0(X, \mathcal O_X(C_0))} = O(q^{-\lfloor k/q \rfloor})$$
or $P$ is a singular point of $X$. The latter possibility can be ruled out since $X$ contains only finitely many singular points by hypothesis and $\deg P > sk$. As a consequence, the probability that $W_n \cap \{f = 0\}$ contains a point of degree $>sk$ is
$$ O(k^n q^{-\lfloor k/q \rfloor }) = o(1) \quad \text{ as } k \to \infty.$$
Putting everything together, the probability that a hypersurface $V(f)$, determined by choosing $f \in H^0(X, \mathcal O_X(D+kE))$ at random via $\psi$, is not quasismooth at some point in $P \in \{x_{n+1}\cdots x_d \neq 0\}$ of degree $>sk$ is $o(1)$ as $k \to \infty$. This proves the lemma in the case $X = Y$.

\begin{step}
Proof for general $Y$.
\end{step}
Following the strategy of the proof of \cite{Poonen}*{Lemma~2.6}, we can restrict to an open affine subset $U$ of the smooth locus $X^\text{sm}$ of $X$. We can find coordinates $t_1, \dots, t_n \in \mathcal O_U(U)$ defining $Y \cap X^\text{sm}$ locally by $t_{m+1} = \dots = t_n = 0$, where $m = \dim Y$. Moreover, there are derivations $d_1, \dots, d_m: \mathcal O_U(U) \to \mathcal O_U(U)$ such that for $f \in \mathcal O_U(U)$ and $P \in Y \cap U$,
\begin{align*}
 Y \cap V(f) \text{ is not quasismooth at } P  &\Leftrightarrow Y \cap V(f) \text{ is not smooth at } P \\
&\Leftrightarrow f(P) = d_1(f) = \dots = d_m(f) = 0.
\end{align*}
For $i = 1, \dots, m$, the coordinate $t_i$ may be considered as element of $\mathbb F_q(X) \cong \mathbb F_q(U)$, and thus
 $t_i \in H^0(X, \mathcal O_X(-\mathrm{div}(t_i)))$. This allows us to draw sections as in Step 2, replacing $D_i$ by $- \mathrm{div}(t_i)$. Restricting elements of $H^0(X, \mathcal O_X(D+kE))$ to $U$, the rest of the proof can be carried out analogously to the case $X = Y$.
\end{proof}

\begin{proof}[Proof of Theorem \ref{T:main}]
If $X$ happens to be zero-dimensional, then we are done by Lemma~\ref{L:low}. Otherwise, as in \cite{Poonen}*{§2.4}, the theorem follows from Lemmas \ref{L:low}, \ref{L:medium} and \ref{L:high} as $r \to \infty$.
\end{proof}

\section{Applications}\label{S:applications}

\subsection{First examples}

We list some easily obtained consequences of Theorem~\ref{T:main}:
\begin{enumerate}[(1)]
  \item Let $d_1, d_2, e_1, e_2 \in \mathbb Z$, $e_1, e_2 > 0$. Then as $k \to \infty$, the probability that a hypersurface of bidegree $(d_1+ke_1,d_2+ke_2)$ in $\mathbb P^m \times \mathbb P^n$ is smooth equals
  $$ \zeta_{\mathbb P^m \times \mathbb P^n}(m+n+1)^{-1} = \prod_{i=0}^m \prod_{j=0}^n (1-q^{i+j-m-n-1}),$$
as computed in \cite{Semiample}*{Example~4.3}.
  \item Let $w, \ell \in \mathbb Z$, $w \geq 1$, $0 \leq \ell \leq w-1$. As $k \to \infty$, the probability that a hypersurface of degree $kw+\ell$ is quasismooth in the weighted projective space $X = \mathbb P(1,\dots,1,w)$ of dimension $n$ equals
\begin{align*}
&0 &\text{if } \ell \geq 2, \\ 
&(1-q^{-1})\cdots(1-q^{-n+1}) \cdot (1-q^{-n})^2 &\text{if } \ell = 1, \\
&(1-q^{-1})^2 \cdot (1-q^{-2}) \cdots (1-q^{-n}) &\text{if } \ell = 0.
\end{align*}
This follows from the computations in Examples \ref{E:wps0} and \ref{E:wps1}. Moreover, as seen in Example~\ref{E:wps1}, in the case $\ell \geq 2$, every hypersurface passes through $(0:\dots:0:1)$ and is not quasismooth at this point.
\end{enumerate}

\subsection{Taylor conditions}

As in \cite{Poonen}*{Theorem~1.2}, there is an extended version of Theorem~\ref{T:main}:

\begin{theorem}\label{T:extended}
Let $X$ be a projective normal simplicial toric variety over a finite field $\mathbb F_q$. Fix a Weil divisor $D$ and an ample Cartier divisor $E$ on $X$. Let $Y \subseteq X$ be a quasismooth subscheme such that $Y$ meets the singular locus of $X$ only in finitely many points. Let $Z \subseteq X$ be a zero-dimensional subscheme and fix a subset $T \subseteq H^0(Z, \mathcal O_X(D)|_Z)$. Then
\begin{align*}
\lim_{k \to \infty} &\frac{\#\left\{f \in H^0(X, \mathcal O_{X}(D+kE)) \middle|
\begin{array}{c}
 (Y \setminus (Y \cap Z)) \cap V(f) \text{ is}\\
 \text{quasismooth and }\varphi_{Z,k}(f) \in T 
\end{array}
\right \} }{\#H^0(X, \mathcal O_{X}(D+kE))}\\
&= \frac{\#T}{\#H^0(Z, \mathcal O_X(D)|_Z)} \cdot \prod_{P \in Y \setminus (Y \cap Z) \text{ closed}} \left(1-q^{-\nu_P(D)}\right),
\end{align*}
where $\varphi_{Z,k}$ is the map as defined in subsection \ref{SS:surj}.
\end{theorem}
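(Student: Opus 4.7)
The plan is to adapt the three-stage closed-point sieve used to prove Theorem~\ref{T:main}, merging the low-degree stage with the prescribed Taylor condition along $Z$. The key additional ingredient is that the Taylor condition at $Z$ and the quasismoothness conditions at low-degree points of $Y \setminus (Y \cap Z)$ behave independently, which will follow from a direct-sum decomposition of a suitable zero-dimensional subscheme.

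For the low-degree stage, fix $r \geq 1$ and let $Y_{<r}^Z$ denote the set of closed points of $Y \setminus (Y \cap Z)$ of degree less than $r$. For each such $P$, the scheme $Y_P$ from Lemma~\ref{L:qsm} is set-theoretically supported at $P$ and therefore disjoint from $Z$, so
$$ Z_r := Z \sqcup \bigsqcup_{P \in Y_{<r}^Z} Y_P $$
is a zero-dimensional subscheme of $X$ whose structure sheaf splits as a direct sum. By Lemma~\ref{L:surj1}, for $k$ sufficiently large the restriction map $\varphi_{Z_r, k}$ is surjective. Counting fibers of this linear map componentwise, using Lemma~\ref{L:qsm} for the quasismoothness condition at each $Y_P$ and the condition $\varphi_{Z,k}(f) \in T$ for the $Z$-summand, will produce
\begin{align*}
&\frac{\#\{f \in H^0(X, \mathcal{O}_X(D+kE)) : \varphi_{Z,k}(f) \in T,\ Y \cap V(f) \text{ qsm at every } P \in Y_{<r}^Z\}}{\#H^0(X, \mathcal{O}_X(D+kE))}\\
&\quad= \frac{\#T}{\#H^0(Z, \mathcal{O}_X(D)|_Z)} \prod_{P \in Y_{<r}^Z} (1 - q^{-\nu_P(D)}).
\end{align*}

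For the medium- and high-degree stages, I would invoke Lemmas~\ref{L:medium} and~\ref{L:high} essentially verbatim: the Taylor condition at $Z$ imposes no additional obstacle, since the density of sections $f$ for which $Y \cap V(f)$ fails to be quasismooth at some closed point of $Y \setminus (Y \cap Z)$ of degree at least $r$ is bounded above by the corresponding density computed for all of $Y$, which vanishes as first $k \to \infty$ and then $r \to \infty$. The finitely many closed points of $Z \cap Y$, all of bounded degree, are absorbed into the Stage-1 analysis once $r$ exceeds the maximum such degree.

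Combining the stages, the density in the statement differs from the Stage-1 quantity by an error vanishing in the iterated limit. Letting $r \to \infty$, the partial product converges to $\prod_{P \in Y \setminus (Y \cap Z)} (1 - q^{-\nu_P(D)})$, since this product agrees with the main-theorem product up to only the finitely many factors indexed by $Y \cap Z$. The only substantive step I expect to be noteworthy is the direct-sum decomposition of $\mathcal{O}_{Z_r}$ together with the surjectivity of $\varphi_{Z_r, k}$ for each fixed $r$; both follow from disjointness of supports and Serre vanishing, so the argument reduces to careful bookkeeping built upon the existing sieve.
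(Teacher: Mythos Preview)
Your proposal is correct and follows essentially the same approach as the paper: form the union $Z' = Z \cup \bigcup_P Y_P$ over low-degree points $P$ of $Y \setminus (Y\cap Z)$, use Lemma~\ref{L:surj1} to get surjectivity of $\varphi_{Z',k}$ and count fibers of the resulting product map, then invoke Lemmas~\ref{L:medium} and~\ref{L:high} unchanged for the medium- and high-degree stages. Your explicit remark that each $Y_P$ is supported at $P$ and hence disjoint from $Z$ is exactly what underlies the paper's product decomposition of $H^0(Z',\mathcal O_X(D)|_{Z'})$.
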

\begin{proof}
Since the set of sections in question is a subset of
$$ \{f \in H^0(X, \mathcal O_X(D+kE)) \mid (Y \setminus (Y \cap Z))\cap V(f) \text { is quasismooth}\},$$
we can apply the Lemmas \ref{L:medium} and \ref{L:high}. It suffices thus to modify the statement on low degree points. Let $Z'$ be the union of $Z$ with the zero-dimensional subscheme $Z$ used in the proof of Lemma~\ref{L:low}. Then a section $f \in H^0(X, \mathcal O_X(D+kE))$ is quasismooth at all $P$ in $(Y\setminus (Y \cap Z))_{<r}$ and $\varphi_{Z,k} \in T$ if and only if $f$ lies in the preimage of
$$ T \times \prod_{P \in(Y\setminus (Y \cap Z))_{<r}} \left(H^0(Y_P, \mathcal O_X(D)|_{Y_P}) \setminus \{0\} \right)$$
under the composition
\begin{align*}\varphi_{Z',k}: &H^0(X, \mathcal O_X(D+kE)) \to H^0(Z', \mathcal O_X(D)|_{Z'}) \\
&\xrightarrow{\simeq} H^0(Z, \mathcal O_X(D)|_Z) \times \prod_{P \in (Y\setminus (Y \cap Z))_{<r}}H^0(Y_P, \mathcal O_X(D)|_{Y_P}).\end{align*}
In virtue of Lemma~\ref{L:surj1}, this map becomes surjective for all sufficiently large $k$. Hence we can derive the formula given in the theorem. 
\end{proof}

As an application, let $Z$ be the zero-dimensional subscheme of all $\mathbb F_q$-rational points of $X$. Assume that no closed point $P \in X$ has $\nu_P(D) = 0$. Then $T := H^0(Z, \mathcal O_X(D)|_Z)\setminus\{0\}$ is non-empty and
\begin{align*}
\lim_{k \to \infty} &\frac{\#\left\{f \in H^0(X, \mathcal O_{X}(D+kE)) \middle|
\begin{array}{c}
(X \setminus Z) \cap V(f) \text{ is quasismooth}\\
\text{and } V(f)(\mathbb F_q) = \emptyset 
\end{array}
\right\}}{\#H^0(X, \mathcal O_{X}(D+kE))}\\
&= \frac{\#T}{\#H^0(Z, \mathcal O_X(D)|_Z)} \cdot \prod_{P \in X \setminus Z \text{ closed}} \left(1-q^{-\nu_P(D)}\right)\\
&> 0.
\end{align*}
In particular, for $k \gg 0$ exist quasismooth sections of $D+kE$ without $\mathbb F_q$-rational points.

\subsection{Singularities of positive dimension}

\begin{corollary}\label{C:positive}
With the notation of Theorem~\ref{T:main}, denote by $\mathrm{NQS}(f)$ the locus where the intersection $Y \cap V(f)$ is not quasismooth. Then
\begin{align*}
\underset{k \to \infty}{\lim\sup}\, &\frac{\#\{f \in H^0(X, \mathcal O_{X}(D+kE)) \mid \dim \mathrm{NQS}(f)\geq 1 \}}{\#H^0(X, \mathcal O_{X}(D+kE))} = 0.
\end{align*}
\end{corollary}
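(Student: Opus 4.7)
The plan is to observe that Corollary~\ref{C:positive} is essentially a byproduct of the proof of Lemma~\ref{L:high}, whose argument in fact yields more than is explicitly stated. Specifically, the decoupling setup of Steps~1--3 together with the inductive estimate in Step~4 shows that, conditioned on a choice of $(f_0, g_1, \dots, g_i)$ for which $\dim W_i \leq n - i$, the probability that $\dim W_{i+1} \leq n - i - 1$ is $1 - O(k^{i} q^{-\lfloor k/q \rfloor}) = 1 - o(1)$ as $k \to \infty$. Iterating from $i = 0$ (where $W_0$ is automatically $n$-dimensional) through $i = n - 1$, the probability that $\dim W_n \leq 0$ tends to $1$.

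On each affine chart $U_\sigma = \{x_{n+1} \cdots x_d \neq 0\}$ of $X$, the non-quasismooth locus of $V(f)$ is contained in $W_n$, so $\dim(\mathrm{NQS}(f) \cap U_\sigma) \leq 0$ with probability $1 - o(1)$, and a union bound over the finitely many such charts covering $X$ establishes the case $X = Y$. For a general quasismooth subscheme $Y$, I would invoke Step~6 of the proof of Lemma~\ref{L:high} and restrict to a finite affine open cover of $Y \cap X^{\mathrm{sm}}$ on which quasismoothness of $Y \cap V(f)$ is detected by the vanishing of $f$ together with $m := \dim Y$ local derivations $d_1(f), \dots, d_m(f)$. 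Running the same decoupling argument with $n$ replaced by $m$ yields $\dim(\mathrm{NQS}(f) \cap U) \leq 0$ with probability $1 - o(1)$ on each affine piece $U$. The hypothesis that $Y$ meets the singular locus of $X$ in only finitely many points ensures that these points form a zero-dimensional set which cannot contribute a positive-dimensional component to $\mathrm{NQS}(f)$.

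The only point requiring verification is the numerology: the number of top-dimensional $\mathbb{F}_q$-irreducible components of the intermediate loci $W_i$ is $O(k^i)$ by Bézout's theorem together with Lemma~\ref{L:degree}, and each such component contributes $O(q^{-\lfloor k/q \rfloor})$ to the probability of a bad step, so a union bound still gives $o(1)$. This bookkeeping is already carried out in the proof of Lemma~\ref{L:high} and transfers without change. Consequently no new substantial obstacle arises, and the corollary amounts to a careful rereading of that proof rather than a genuinely new argument.
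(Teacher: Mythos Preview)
Your approach is correct but takes a more laborious route than the paper's. You reopen the proof of Lemma~\ref{L:high} and extract the intermediate conclusion that the locus $W_n$ (resp.\ its analogue for general $Y$) is zero-dimensional with probability $1 - o(1)$, then observe that $\mathrm{NQS}(f)$ is contained in this locus on each chart. The paper instead applies the \emph{statement} of Lemma~\ref{L:high} as a black box: any positive-dimensional subscheme over $\mathbb{F}_q$ contains closed points of arbitrarily large degree, so if $\dim \mathrm{NQS}(f) \geq 1$ then $Y \cap V(f)$ has a non-quasismooth point of degree $> sk$, placing $f$ in the set whose density Lemma~\ref{L:high} already shows tends to zero. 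Your version makes transparent which step of the high-degree argument carries the weight, but the paper's one-line deduction avoids re-examining the internals of that proof.
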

\begin{proof}
This follows immediately from Lemma~\ref{L:high}, as such an $f$ has a non-quasismooth point in $Y \cap V(f)$ of arbitrarily large degree.
\end{proof}

\subsection{Allowing a finite number of singularities}

\begin{theorem}\label{T:finite}
In the situation of Theorem~\ref{T:main}, suppose further that for any closed point $P \in Y$ holds $\nu_P(D) > 0$. Choose an integer $s \geq 1$. Then
\begin{align*}
\lim_{k \to \infty} &\frac{\#\left\{f \in H^0(X, \mathcal O_{X}(D+kE)) \middle|
\begin{array}{c}
Y \cap V(f) \text{ is quasismooth}\\
\text{except for} < s \text{ points}  
\end{array}
\right\}}{\#H^0(X, \mathcal O_{X}(D+kE))}\\
&=  \prod_{P \in Y \text{ closed}} (1 - q^{-\nu_P(D)}) \cdot \sum_{J \subseteq Y, \#J < s} \prod_{P \in J}\frac{1}{q^{\nu_P(D)} - 1}.
\end{align*}
\end{theorem}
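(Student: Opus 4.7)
The plan is to adapt Poonen's strategy for counting hypersurfaces with a bounded number of singularities, combining the three-regime sieve of Lemmas \ref{L:low}, \ref{L:medium} and \ref{L:high} with Corollary~\ref{C:positive} via a finite inclusion–exclusion over candidate non-quasismooth loci. Let $\mu_k$ denote the uniform probability on $H^0(X, \mathcal O_X(D+kE))$, and $E$ the event $\{\mathrm{NQS}(f) \text{ finite},\ \#\mathrm{NQS}(f) < s\}$. For each $r \geq 1$ set
\begin{align*}
A_{r,k} := \{f \mid \#(\mathrm{NQS}(f) \cap Y_{<r}) < s\}, \quad
B_{r,k} := \{f \mid \mathrm{NQS}(f) \not\subseteq Y_{<r}\}, \quad
C_k := \{f \mid \dim \mathrm{NQS}(f) \geq 1\}.
\end{align*}
Then $E \subseteq A_{r,k}$ and $A_{r,k} \setminus (B_{r,k} \cup C_k) \subseteq E$, so the quantity of interest is sandwiched by
\begin{align*}
\mu_k(A_{r,k}) - \mu_k(B_{r,k}) - \mu_k(C_k) \leq \mu_k(E) \leq \mu_k(A_{r,k}).
\end{align*}

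I would then control the two error terms. Corollary~\ref{C:positive} directly gives $\lim_{k\to\infty}\mu_k(C_k) = 0$. For $B_{r,k}$, the hypothesis $\nu_P(D) > 0$ combined with $Y$ meeting the singular locus of $X$ in only finitely many points yields $\beta_m < m$ for all $m$ by the remark following Lemma~\ref{L:medium}, so Lemmas \ref{L:medium} and \ref{L:high} together produce $\lim_{r \to \infty} \limsup_{k \to \infty} \mu_k(B_{r,k}) = 0$.

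For the main term, Lemma~\ref{L:surj1} makes the combined restriction map
\begin{align*}
\varphi_{Z,k}: H^0(X, \mathcal O_X(D+kE)) \twoheadrightarrow \prod_{P \in Y_{<r}} H^0(Y_P, \mathcal O_X(D)|_{Y_P})
\end{align*}
surjective for $k \gg 0$. Applying Lemma~\ref{L:qsm} coordinatewise, the events ``$f$ fails quasismoothness at $P$'' for distinct $P \in Y_{<r}$ become jointly independent, each of probability $q^{-\nu_P(D)}$. Partitioning $A_{r,k}$ according to the exact intersection $J = \mathrm{NQS}(f) \cap Y_{<r}$ (necessarily of size $<s$) then gives
\begin{align*}
\mu_k(A_{r,k}) = \prod_{P \in Y_{<r}} (1 - q^{-\nu_P(D)}) \cdot \sum_{J \subseteq Y_{<r},\, \#J < s} \prod_{P \in J} \frac{1}{q^{\nu_P(D)} - 1}.
\end{align*}

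Letting $k \to \infty$ and then $r \to \infty$, the product converges to $\prod_{P \in Y}(1 - q^{-\nu_P(D)})$ exactly as in the proof of Theorem~\ref{T:main}. The main obstacle that must be handled carefully is the convergence of the infinite sum $\sum_{J \subseteq Y,\, \#J < s} \prod_{P \in J} (q^{\nu_P(D)} - 1)^{-1}$: for each fixed cardinality $j < s$ this expands into products of the series $\sum_{P \in Y\text{ closed}} (q^{\nu_P(D)} - 1)^{-1}$, and absolute convergence of the latter is precisely the Lang--Weil estimate applied in Lemma~\ref{L:medium} under $\beta_m < m$. Pushing the squeeze through these limits yields the claimed formula.
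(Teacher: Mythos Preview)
Your proposal is correct and follows essentially the same route as the paper: a low-degree computation via surjectivity of $\varphi_{Z,k}$ and partitioning by the exact non-quasismooth locus in $Y_{<r}$, combined with the medium/high-degree error bounds, and then a Lang--Weil convergence check for the infinite sum. Your sandwich with $A_{r,k}$, $B_{r,k}$, $C_k$ is a slightly more explicit packaging of the same argument (the $C_k$ term is in fact redundant, since $\mathrm{NQS}(f)\subseteq Y_{<r}$ already forces finiteness, but it does no harm); and note that you are using the symbol $E$ for both the ample divisor and your event, which you should rename.
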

\begin{proof}
Again, we can apply the strategy for medium and high degree points without big changes. So we take a look at low degree points. Fix an integer $r \geq 1$ and let $Y_{<r}$ be the set of closed points of $U$ of degree less than $r$. Denote again by $Z$ the union of all $Y_P$ for $P \in Y_{<r}$.

Recall that for $f \in H^0(X, \mathcal O_X(D+kE))$, the intersection $Y \cap V(f)$ is quasismooth at all points in $Y_{<r}$ if and only if all entries $\varphi_{Z,k}(f)$ are non-zero, where $\varphi_{Z,k}$ is the composition
\begin{align*}
H^0(X, \mathcal O_X(D+kE)) \to H^0(Z, \mathcal O_X(D)|_{Z}) \cong \prod_{P \in Y_{<r}} H^0(Y_P, \mathcal O_X(D)|_{Y_P})
\end{align*}
as in the proof of Lemma~\ref{L:low}.

In particular, the intersection $Y \cap V(f)$ is quasismooth at all points in $Y_{<r}$ except for less than $s$ points if and only if less than $s$ entries of $\varphi_{Z,k}(f)$ are zero.

Fix an enumeration $Y_{<r} = \{P_1, \dots, P_t\}$. If $0 \leq i < s$, then the number of elements in $ \prod_{P \in Y_{<r}} H^0(Y_P, \mathcal O_X(D)|_{Y_P})$ where precisely $i$ entries are zero is given by
$$ \sum_{1 \leq j_1 < \dots < j_i \leq t} \prod_{\ell \in \{1,\dots,t\}\setminus \{j_1,\dots, j_i\}} \left(\# H^0(Y_{P_\ell}, \mathcal O_X(D)|_{Y_{P_\ell}}) - 1\right).$$

Hence $Y \cap V(f)$ is quasismooth at all points $Y_{<r}$ except for less than $s$ points if and only if $f$ lies in the preimage of 
$$\sum_{i=0}^{s-1} \sum_{1 \leq j_1 < \dots < j_i \leq t} \prod_{\ell \in \{1,\dots,t\}\setminus \{j_1,\dots, j_i\}} \left(q^{\nu_{P_\ell}(D)} -1 \right)$$
elements under $\varphi_{k,Y}$.

By Lemma~\ref{L:surj1}, for any $r$ exists an integer $k_r$ such that $\varphi_{Z,k}$ is surjective for $k \geq k_r$. Thus for large enough $k$, the fibers of $\varphi_{Z,k}$ have the same cardinality.

Consequently,
\begin{align*}
  &\frac{\#\left\{ f \in H^0(X, \mathcal O_X(D+kE)) \middle|
  \begin{array}{c}
   Y \cap V(f) \text{ is quasismooth at all points}\\
   \text{in } Y_{<r} \text{ with } <s \text{ exceptions}
  \end{array}
  \right\}}{\#H^0(X, \mathcal O_X(D+kE))} \\
   &\quad= \frac{\sum_{i=0}^{s-1} \sum_{1 \leq j_1 < \dots < j_i \leq t} \prod_{\ell \in \{1,\dots,t\}\setminus \{j_1,\dots, j_i\}} \left( q^{\nu_{P_\ell}(D)} - 1\right) }{\prod_{\ell=1}^t q^{\nu_{P_\ell}(D)}} \\
   &\quad= \sum_{i=0}^{s-1}  \sum_{1 \leq j_1 < \dots < j_i \leq t} \prod_{\ell \in \{1,\dots,t\}\setminus \{j_1,\dots, j_i\}} \left(1 - q^{-\nu_{P_\ell}(D)} \right) \prod_{\ell=1}^i q^{-\nu_{P_{j_\ell}}(D)}\\
   &\quad=  \prod_{\ell=1}^t \left(1 - q^{-\nu_{P_\ell}(D)} \right)  \cdot \sum_{i=0}^{s-1}  \sum_{1 \leq j_1 < \dots < j_i \leq t} \prod_{\ell = 1}^i \frac{q^{-\nu_{P_{j_\ell}}(D)}}{1-q^{-\nu_{P_{j_\ell}}(D)}} \\
   &\quad= \prod_{P \in Y_{<r}} (1 - q^{-\nu_P(D)}) \cdot \sum_{J \subseteq Y_{<r}, \#J < s} \prod_{P \in J}\frac{1}{q^{\nu_P(D)} - 1}.
\end{align*}
It remains to show that 
$$\sum_{J \subseteq Y_{<r}, \#J < s} \prod_{P \in J}\frac{1}{q^{\nu_P(D)} - 1}$$
converges as $r \to \infty$. To this end, note that this is an increasing sequence as $r$ grows. So it suffices to give an absolute upper bound. Since
\begin{align*}
\sum_{J \subseteq Y_{<r}, \#J < s}\prod_{P \in J} \frac{1}{q^{\nu_P(D)} - 1} 
  &=\sum_{i=0}^{s-1} \sum_{\{P_1,\dots,P_i\} \subseteq Y_{<r}} \frac{1}{q^{\nu_{P_1}(D)} - 1} \cdots \frac{1}{q^{\nu_{P_i}(D)} - 1}
  \\
  &\leq \sum_{i=0}^{s-1} \left(\sum_{P \in Y_{<r}} \frac{1}{q^{\nu_P(D)} - 1}\right)^i,
\end{align*}
it suffices to bound $\sum_{P \in Y_{< r}} (q^{\nu_P(D)}-1)^{-1}$. By Lemma~\ref{L:nuP}, we have for all $P \in Y$ that $\nu_P(D) \leq \deg P \cdot \dim \pi^{-1}(Y)$ and $\nu_P(D) \geq \deg P$. Analogously to the proof of Lemma~\ref{L:medium},
\begin{align*}
\sum_{P \in Y_{<r}} \frac{1}{q^{\nu_P(D)} - 1}
&\leq \sum_{e=1}^{r-1} \sum_{m = 1}^{\dim \pi^{-1}(Y)} \frac{\#\{P \in Y \mid \deg P = e,  \nu_P(D) = em\}}{q^{e m} - 1}\\
    &\leq \sum_{m = 1}^{\dim \pi^{-1}(Y)} \sum_{e=1}^{r-1} \frac{C \cdot q^{e (m-1)}}{q^{em} - 1},
\end{align*}
for some constant $C$ not depending on $r$. Consequently,
\begin{align*}
\sum_{P \in Y_{<r}} \frac{1}{q^{\nu_P(D)} - 1}\leq C \cdot \sum_{m = 1}^{\dim \pi^{-1}(Y)} \sum_{e=1}^\infty \frac{1}{q^e - q^{-e(m-1)}}.
\end{align*}
Since $\sum_{e=1}^\infty (q^e - q^{-e(m-1)})^{-1}$ exists for $m \geq 1$, the expression on the left-hand side is bounded from above. Thus the desired limit exists.
\end{proof}

\begin{example}
 For $X = Y = \mathbb P^2$, the density of plane curves with at most one singular point is given by
$$ \frac{1}{\zeta_{\mathbb P^2}(3)} \cdot \left(1 + \sum_{P \in \mathbb P^2 \text{ closed}} \frac{1}{q^{3 \deg P}-1} \right). $$
For $q = 5$, this quantity is about $0.96984$.
\end{example}

We investigate now the density of hypersurfaces of degree $k$ whose number of singularities is bounded in terms of a strictly increasing function $k$.

\begin{lemma}\label{L:zeta} $\,$
\begin{enumerate}[\normalfont (1)]
  \item Let $(a_n)_{n \in \mathbb N}$ be a sequence of positive real numbers. Then for any $n \in \mathbb N$,
  $$ \sum_{J \subseteq \{1, \dots, n\}} \prod_{j \in J} a_j =  \prod_{j=1}^n (a_j + 1).$$
  \item Under the hypotheses of Theorem~\ref{T:finite},
$$ \lim_{s \to \infty} \sum_{J \subseteq Y, \#J < s} \prod_{P \in J} \frac{1}{q^{\nu_P(D)} - 1} =  \prod_{P \in Y \text{ closed}} \frac{1}{1-q^{-\nu_P(D)}}.$$
\end{enumerate}
\end{lemma}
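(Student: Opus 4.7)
The plan for part (1) is the standard expansion identity, which I would verify by a straightforward induction on $n$: for $n=0$ both sides equal $1$, and in the inductive step subsets $J \subseteq \{1,\dots,n+1\}$ split according to whether they contain $n+1$, giving
\[
\sum_{J \subseteq \{1,\dots,n+1\}}\prod_{j \in J} a_j \;=\; (1 + a_{n+1})\sum_{J \subseteq \{1,\dots,n\}}\prod_{j \in J} a_j \;=\; (a_{n+1}+1)\prod_{j=1}^n(a_j+1).
\]
Equivalently, expanding $\prod_{j=1}^n(1+a_j)$ contributes precisely one term $\prod_{j \in J}a_j$ for each subset $J$ of $\{1,\dots,n\}$, recording the factors in which one picks $a_j$ rather than $1$.

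For part (2), set $a_P := (q^{\nu_P(D)} - 1)^{-1} > 0$. The key extra input, already established at the end of the proof of Theorem~\ref{T:finite}, is the finiteness of $\sum_{P \in Y\text{ closed}} a_P$. Applying part (1) to the finite truncation $Y_{<r}$ yields
\[
\sum_{J \subseteq Y_{<r}} \prod_{P \in J} a_P \;=\; \prod_{P \in Y_{<r}}(1 + a_P) \;=\; \prod_{P \in Y_{<r}} \frac{1}{1 - q^{-\nu_P(D)}}.
\]
Both sides are monotone increasing in $r$, and the right-hand side is bounded as $r \to \infty$, since $\log \prod_{P \in Y_{<r}}(1+a_P) \leq \sum_{P \in Y_{<r}} a_P$ is uniformly bounded by the finite sum above.

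To conclude, I would observe that both the partial sum $\sum_{J \subseteq Y,\,\#J < s}\prod_{P \in J}a_P$ (increasing in $s$) and the truncated sum $\sum_{J \subseteq Y_{<r}}\prod_{P \in J}a_P$ (increasing in $r$) are bounded above by, and converge monotonically to, the absolutely convergent series $\sum_{J \subseteq Y \text{ finite}}\prod_{P \in J} a_P$. Hence by monotone convergence applied in both directions,
\[
\lim_{s \to \infty}\sum_{J \subseteq Y,\,\#J < s}\prod_{P \in J} a_P \;=\; \lim_{r \to \infty}\prod_{P \in Y_{<r}}\frac{1}{1-q^{-\nu_P(D)}} \;=\; \prod_{P \in Y\text{ closed}}\frac{1}{1 - q^{-\nu_P(D)}}.
\]
The only mild obstacle is justifying this double use of monotone convergence, but since the bound $\sum_{P \in Y} a_P < \infty$ has already been imported from the proof of Theorem~\ref{T:finite}, no new analytic estimate is needed; the argument is essentially combinatorial once part (1) is in hand.
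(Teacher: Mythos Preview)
Your proof is correct and follows essentially the same approach as the paper: part~(1) is the elementary expansion identity, and for part~(2) one applies it to the finite truncations $Y_{<r}$, then passes to the limit using monotonicity together with the bound $\sum_P (q^{\nu_P(D)}-1)^{-1} < \infty$ established in the proof of Theorem~\ref{T:finite}. The only cosmetic difference is that the paper phrases the final step as interchanging iterated limits of the doubly-indexed sequence $\sum_{J \subseteq Y_{<r},\,\#J<s}\prod_{P\in J}a_P$, whereas you argue directly that the $s$-filtration and the $r$-filtration share the common supremum $\sum_{J\subseteq Y\text{ finite}}\prod_{P\in J}a_P$; these are equivalent formulations of the same monotone-convergence argument.
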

\begin{proof}
Part (1) is easy. For (2), part (1) implies for any integer $r \geq 1$ the identity
\begin{align*}
 \lim_{s \to \infty} \sum_{J \subseteq Y_{<r}, \#J < s} \prod_{P \in J} \frac{1}{q^{\nu_P(D)} - 1}
 &=  \sum_{J \subseteq Y_{<r}} \prod_{P \in J} \frac{1}{q^{\nu_P(D)} - 1} \\
 &=  \prod_{P \in Y_{<r}} \frac{1}{1-q^{-\nu_P(D)}}.
\end{align*}
Taking limits,
$$ \lim_{r \to \infty} \lim_{s \to \infty} \sum_{J \subseteq Y_{<r}, \#J < s} \prod_{P \in J} \frac{1}{q^{\nu_P(D)} - 1} = \lim_{r \to \infty}  \prod_{P \in Y_{<r}} \frac{1}{1-q^{-\nu_P(D)}}.$$
Since the double sequence
$$ \left( \sum_{J \subseteq Y_{<r}, \#J < s} \prod_{P \in J} \frac{1}{q^{\nu_P(D)} - 1} \right)_{r,s}$$
is increasing and bounded, the iterated limits may be interchanged.
\end{proof}

\begin{corollary}\label{C:diagonal}
Let $g: \mathbb Z_{\geq 0} \to  \mathbb Z_{\geq 0} $ be a strictly increasing function. Then, under the hypotheses of Theorem~\ref{T:finite},
\begin{align*}
\lim_{k \to \infty} &\frac{\#\left\{f \in H^0(X, \mathcal O_{X}(D+kE)) \middle|
\begin{array}{c}
Y \cap V(f) \text{ is quasismooth}\\
\text{except for} < g(k) \text{ points} 
\end{array}
\right\}}{\#H^0(X, \mathcal O_{X}(D+kE))} = 1.
\end{align*}
\end{corollary}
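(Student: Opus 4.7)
The plan is a diagonal-limit argument. Write
\[ \rho_{k,s} := \frac{\#\{f \in H^0(X, \mathcal O_X(D+kE)) \mid Y \cap V(f) \text{ quasismooth except for } < s \text{ points}\}}{\#H^0(X, \mathcal O_X(D+kE))} \]
for the density appearing in Theorem~\ref{T:finite}. What Theorem~\ref{T:finite} and Lemma~\ref{L:zeta}(2) already give is the iterated identity $\lim_{s \to \infty} \lim_{k \to \infty} \rho_{k,s} = 1$; the task is to upgrade this to the diagonal statement $\lim_{k \to \infty} \rho_{k,g(k)} = 1$.

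First I would record the trivial monotonicity $\rho_{k,g(k)} \geq \rho_{k,s}$ whenever $g(k) \geq s$, which is just the inclusion of event sets: requiring fewer exceptions defines a smaller family of sections. Since $g$ is a strictly increasing function $\mathbb Z_{\geq 0} \to \mathbb Z_{\geq 0}$, for every fixed $s$ there exists $k(s)$ with $g(k) \geq s$ for all $k \geq k(s)$. Combined with $\rho_{k,g(k)} \leq 1$, this reduces the corollary to showing $\liminf_{k \to \infty} \rho_{k,g(k)} \geq 1$.

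Next I would fix $\varepsilon > 0$ and choose the two parameters in the right order. By Lemma~\ref{L:zeta}(2), I can pick $s$ so large that
\[ \prod_{P \in Y \text{ closed}} \left(1-q^{-\nu_P(D)}\right) \cdot \sum_{J \subseteq Y,\,\#J < s} \prod_{P \in J}\frac{1}{q^{\nu_P(D)}-1} > 1 - \tfrac{\varepsilon}{2}. \]
With this $s$ fixed, Theorem~\ref{T:finite} furnishes $k_0 \geq k(s)$ such that $\rho_{k,s} > 1 - \varepsilon$ for all $k \geq k_0$. Monotonicity then gives $\rho_{k,g(k)} \geq \rho_{k,s} > 1-\varepsilon$ on this range, which is what we need.

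I do not anticipate any real obstacle: the convergence in $k$ at each fixed $s$ is Theorem~\ref{T:finite}, the convergence in $s$ of the limit value is Lemma~\ref{L:zeta}(2), and the set-theoretic monotonicity in the exception count makes the exchange of limits legal. The only care required is ordering (pick $s$ before $k$), which is forced by the structure of the iterated limit; nothing more delicate (such as a uniform bound on $\rho_{k,s} - \lim_k \rho_{k,s}$ in $s$) is needed because the one-sided lower bound already yields the claim.
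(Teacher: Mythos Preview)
Your proposal is correct and follows essentially the same diagonal-limit argument as the paper: define the double-indexed density, use Theorem~\ref{T:finite} together with Lemma~\ref{L:zeta}(2) to get the iterated limit equal to $1$, then exploit the set-theoretic monotonicity in $s$ to pass to $\rho_{k,g(k)}$. The only cosmetic difference is that the paper phrases the final step via $\liminf$ directly rather than splitting $\varepsilon$ into two pieces.
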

\begin{proof}
 For integers $k \geq 0$, $s \geq 1$ define
\begin{align*}
a_{k,s} := \frac{\#\left\{f \in H^0(X, \mathcal O_{X}(D+kE)) \middle|
\begin{array}{c}
Y \cap V(f) \text{ is quasismooth}\\
\text{except for} < s \text{ points} 
\end{array}
\right\}}{\#H^0(X, \mathcal O_{X}(D+kE))}.
\end{align*}
Due to Theorem~\ref{T:finite} and Lemma~\ref{L:zeta},
$$\lim_{s\to \infty} \lim_{k\to\infty} a_{k,s} = 1.$$
Using the same reasoning as in the proof of Lemma~\ref{L:medium}~(2), one finds that for any given $\varepsilon > 0$, 
$$ 1 - \varepsilon \leq \underset{k \to \infty}{\lim\inf}\, a_{k,g(k)} \leq 1,$$
which proves the claim.
\end{proof}

\subsection{Length of the singular scheme}

As a final application, we show an analogue of Corollary~\ref{C:diagonal} for lengths of singular schemes of hypersurfaces on smooth toric varieties. Let $f \in S = \mathbb F_q[x_1,\dots,x_d]$ be a homogeneous polynomial. We endow the singular locus $\Sigma(f)$ of $f$ with the scheme structure given by the vanishing of the ideal $\left\langle f, \frac{\partial f}{\partial x_1}, \dots, \frac{\partial f}{\partial x_d} \right\rangle$. 

Pick a closed point $P \in X$ with local ring $\mathcal O_{X,P}$ and maximal ideal $\mathfrak m_{X,P}$. Since $X$ is smooth, we have a natural restriction map $S \to \mathcal O_{X,P}$. Define 
$$\length_P(\Sigma(f)) := \dim_{\mathbb F_q} \left. \mathcal O_{X,P}\middle/ \left\langle f, \frac{\partial f}{\partial x_1}, \dots, \frac{\partial f}{\partial x_d} \right\rangle \right. .$$
Then
$$ \length(\Sigma(f)) = \sum_{P \in X \text{closed }} \length_P(\Sigma(f)).$$
Suppose that $V(f)$ has only isolated singularities. Since isolated singularities are finitely determined \cite{Boubakri}*{Theorem~3}, $\length_P(\Sigma(f))$ depends only on the Taylor expansion of $f$ up to some degree. More precisely, for each integer $a \geq 0$ exists an $e_0 \geq 0$ such that for all integers $e \geq e_0$, we find a set $B_{P,a,e} \subseteq \mathcal O_{X,P}/\mathfrak m_{X,P}^{e}$ with the property that $\length_P(\Sigma(f)) = a$ if and only if $f$ lies in the preimage of $B_{P,a,e}$ under the natural map $S \to \mathcal O_{X,P}/\mathfrak m_{X,P}^{e}$. Write
$$\mu_P(a) :=  \frac{\#B_{P,a,e}}{\#\mathcal O_{X,P}/ \mathfrak m_{X,P}^e}.$$
Note that this quotient does not depend on the choice of $e$ due to finite determinacy. For example, $$\mu_P(0) = \frac{\#((\mathcal O_{X,P}/ \mathfrak m_{X,P}^2 )\setminus \{0\})}{\#\mathcal O_{X,P}/ \mathfrak m_{X,P}^2} = 1 - q^{-\deg P(\dim X +1)}.$$
We can now derive a result similar to Theorem~\ref{T:finite}:
\begin{theorem}\label{T:scheme}
In the situation of Theorem~\ref{T:main}, suppose further that $X$ is smooth. Choose an integer $s \geq 1$ and let
$$ A_s := \left\{(a_P)_{P \in X \text{ closed}} \middle | 
 a_P \in \{0, 1, \dots, s\} \text{ for all } P \in X \text{ closed and }
   \sum_{P \in X \text{ closed}} a_P < s 
 \right\}.$$
 Then
\begin{align*}
\lim_{k \to \infty} &\frac{\#\{f \in H^0(X, \mathcal O_{X}(D+kE)) \mid \length(\Sigma(f)) < s \}}{\#H^0(X, \mathcal O_{X}(D+kE))}\\
&=  \frac{1}{\zeta_X(\dim X + 1)} \cdot \sum_{a \in A_s} \prod_{P \in X \text{ closed}}\frac{\mu_P(a_P)}{\mu_P(0)}.
\end{align*}
\end{theorem}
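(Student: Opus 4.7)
The plan is to rerun the low/medium/high-degree sieve of Section~\ref{S:sieve}, this time weighting low-degree points by the finite-determinacy probabilities $\mu_P(a)$ rather than the indicator of quasismoothness, and then reproducing the limit argument of Theorem~\ref{T:finite}.

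Fix $r\geq 1$. Since $X$ is smooth, $Y=X$ is quasismooth and $\length_P(\Sigma(f))\geq 1$ is equivalent to $V(f)$ being singular (equivalently, not quasismooth) at $P$. The hypotheses of Lemmas~\ref{L:medium} and~\ref{L:high} are trivially satisfied ($\nu_P(D)=\deg P(\dim X+1)>0$ for every closed $P$ by Lemma~\ref{L:nuP}, so $\beta_m<m$ for $m\leq\dim X$), and together they yield
\begin{align*}
\lim_{r\to\infty}\limsup_{k\to\infty}\frac{\#\{f\in H^0(X,\mathcal O_X(D+kE))\mid V(f)\text{ is singular at some }P\text{ with }\deg P\geq r\}}{\#H^0(X,\mathcal O_X(D+kE))}=0.
\end{align*}
Off this exceptional locus $\length(\Sigma(f))=\sum_{P\in X_{<r}}\length_P(\Sigma(f))$, so it suffices to compute the density of $\{f\mid\sum_{P\in X_{<r}}\length_P(\Sigma(f))<s\}$ and then take $r\to\infty$.

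Enumerate $X_{<r}=\{P_1,\dots,P_t\}$. By finite determinacy pick a single integer $e\geq 2$ such that, for every $i$ and every $a\in\{0,1,\dots,s-1\}$, whether $\length_{P_i}(\Sigma(f))=a$ depends only on the image of $f$ in $\mathcal O_{X,P_i}/\mathfrak m_{X,P_i}^e$, cut out there by the set $B_{P_i,a,e}$ of density $\mu_{P_i}(a)$. Let $Z_r\subseteq X$ be the zero-dimensional subscheme defined by $\bigcap_i\mathfrak m_{X,P_i}^e$. Lemma~\ref{L:surj1} supplies $k_r$ such that
$$\varphi_{Z_r,k}:H^0(X,\mathcal O_X(D+kE))\twoheadrightarrow H^0(Z_r,\mathcal O_X(D)|_{Z_r})\cong\prod_{i=1}^t\mathcal O_{X,P_i}/\mathfrak m_{X,P_i}^e$$
is surjective for $k\geq k_r$. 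Then the random variables $\length_{P_i}(\Sigma(f))$ become jointly independent with marginals $\mu_{P_i}$, and
\begin{align*}
\frac{\#\{f\mid\sum_{P\in X_{<r}}\length_P(\Sigma(f))<s\}}{\#H^0(X,\mathcal O_X(D+kE))}=\prod_{P\in X_{<r}}\mu_P(0)\cdot\sum_{\substack{(a_P)_{P\in X_{<r}}\\ \sum a_P<s}}\prod_{P\in X_{<r}}\frac{\mu_P(a_P)}{\mu_P(0)}.
\end{align*}

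Finally we let $r\to\infty$. Theorem~\ref{T:main} applied to $Y=X$ gives
$$\prod_{P\in X_{<r}}\mu_P(0)=\prod_{P\in X_{<r}}\bigl(1-q^{-\deg P(\dim X+1)}\bigr)\xrightarrow{r\to\infty}\frac{1}{\zeta_X(\dim X+1)}.$$
To pass the sum to the limit, note that $\length_P(\Sigma(f))\geq 1$ forces $f\in\mathfrak m_{X,P}^2$, hence $\mu_P(a)\leq q^{-\deg P(\dim X+1)}$ for $a\geq 1$ and
$$\frac{\mu_P(a)}{\mu_P(0)}\leq\frac{1}{q^{\deg P(\dim X+1)}-1}.$$
Grouping each tuple by its support $J=\{P\mid a_P\geq 1\}$ and bounding the number of tuples $(a_P)_{P\in J}$ with $\sum a_P<s$ crudely by $s^{|J|}$, the inner sum is dominated by $\sum_{i=0}^{s-1}s^i\bigl(\sum_{P\in X_{<r}}(q^{\deg P(\dim X+1)}-1)^{-1}\bigr)^i$, which is uniformly bounded in $r$ by the Lang-Weil estimate exactly as at the end of Theorem~\ref{T:finite}. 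Monotone convergence then identifies the iterated limit with the sum over $A_s$ and completes the proof.

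The main obstacle is bookkeeping rather than a single hard step: one has to verify that (i) the medium/high-degree error vanishes before invoking finite determinacy, (ii) the integer $e$ can be chosen uniformly over $X_{<r}$ for each fixed $r$, and (iii) the sum over the infinite index set $A_s$ arises legitimately as the limit of its finite truncations. The last point is the most delicate and is exactly what the $\mu_P(a)/\mu_P(0)$ bound above secures.
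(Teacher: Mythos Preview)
Your proof is correct and follows essentially the same sieve structure as the paper's: both discard singularities of large degree via Lemmas~\ref{L:medium} and~\ref{L:high}, compute the low-degree contribution by surjectivity onto $\prod_i \mathcal O_{X,P_i}/\mathfrak m_{P_i}^e$ (Lemma~\ref{L:surj1}), and then let $r\to\infty$. The only difference is the convergence step---the paper invokes Theorem~\ref{T:finite} via the inclusion $\{\length(\Sigma(f))<s\}\subseteq\{<s\text{ singular points}\}$, while you bound the tail directly from $\mu_P(a)/\mu_P(0)\leq(q^{\deg P(\dim X+1)}-1)^{-1}$; both routes terminate in the same Lang--Weil estimate.
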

\begin{proof}
In view of Corollary~\ref{C:positive}, we can restrict to hypersurfaces with isolated singularities. It is sufficient to perform the low degree computation and show convergence, the strategy for medium and high degree points being the same as previously. Fix an $r \geq 1$ and let $X_{<r} = \{P_1, \dots, P_t\}$ be the set of closed points of $X$ of degree $<r$. Let $(a_1, \dots, a_t)$ be a sequence of non-negative integers satisfying $a_1 + \dots + a_t = s$. Fix an integer $e$ being large enough to test whether $\length_{P_i}(\Sigma(f)) = a_i$ for all $i \in \{1, \dots, t\}$. The ideals $\mathfrak m_{X,P_i}^e$, $i = 1, \dots, t$, define zero-dimensional subschemes of $X$, let $Z$ denote their union. Then the natural map
$$ H^0(X, \mathcal O_X(D+kE)) \to H^0(Z, \mathcal O_Z) \cong \prod_{i=1}^t \mathcal O_{X,P_i}/\mathfrak m_{X,P_i}^e $$
becomes surjective for large enough $k$ due to Lemma~\ref{L:surj1}. Hence, imitating the proof of Lemma~\ref{L:low},
\begin{align*}
&\frac{\#\{f \in H^0(X, \mathcal O_{X}(D+kE)) \mid \length_{P_i}(\Sigma(f)) = a_i,\, i = 1, \dots, t\}}{\#H^0(X, \mathcal O_{X}(D+kE))} \\
&= \prod_{i=1}^t \mu_{P_i}(a_i), \quad k \gg 0.
\end{align*}
Consequently,
\begin{align*}
&\frac{\#\{f \in H^0(X, \mathcal O_{X}(D+kE)) \mid \length(\Sigma(f)) < s\}}{\#H^0(X, \mathcal O_{X}(D+kE))} \\
&\quad = \sum_{(a_1, \dots, a_t):\, \sum_{i=1}^t a_i < s} \prod_{i=1}^t \mu_{P_i}(a_i)\\
&\quad = \prod_{i=1}^t \mu_{P_i}(0) \cdot \sum_{(a_1, \dots, a_t):\, \sum_{i=1}^t a_i < s} \prod_{i=1}^t \frac{\mu_{P_i}(a_i)}{\mu_{P_i}(0)}\\
&\quad = \prod_{P \in X_{<r}} (1 - q^{-\deg P(\dim X + 1)}) \cdot \sum_{(a_P)_{P \in X_{<r}}:\, \sum_P a_P < s} \prod_{P \in X_{<r}} \frac{\mu_{P}(a_P)}{\mu_{P}(0)}
\end{align*}
for $k \gg 0$. The convergence of this expression follows from Theorem~\ref{T:finite}, as hypersurfaces $f$ with $\length(\Sigma(f)) < s$ have less than $s$ singular points.
\end{proof}

\begin{example}
 For $X = \mathbb P^2$, one finds
$$ \mu_P(1) = q^{-3 \deg P} - q^{-4 \deg P}, \quad P \in \mathbb P^2 \text{ closed}.$$
The density of plane curves with at most one ordinary double point as a singularity is therefore given by
$$ \frac{1}{\zeta_{\mathbb P^2}(3)} \cdot \left(1 + \sum_{P \in \mathbb P^2 \text{ closed}} \frac{1}{q^{\deg P} + q^{2 \deg P} + q^{3 \deg P}} \right). $$
For $q = 5$, this quantity is about $0.93113$.
\end{example}

\begin{corollary}\label{C:diagonalscheme}
In the situation of Theorem~\ref{T:scheme}, let $g: \mathbb Z_{\geq 0} \to  \mathbb Z_{\geq 0} $ be a strictly increasing function. Then
\begin{align*}
\lim_{k \to \infty} \frac{\#\{f \in H^0(X, \mathcal O_{X}(D+kE)) \mid \length(\Sigma(f)) < g(k)\}}{\#H^0(X, \mathcal O_{X}(D+kE))} = 1.
\end{align*}
\end{corollary}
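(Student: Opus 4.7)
The plan is to follow the diagonal argument used in Corollary~\ref{C:diagonal}, with Theorem~\ref{T:scheme} taking the role of Theorem~\ref{T:finite}. Set
$$b_{k,s} := \frac{\#\{f \in H^0(X, \mathcal O_X(D+kE)) \mid \length(\Sigma(f)) < s\}}{\#H^0(X, \mathcal O_X(D+kE))}.$$
This quantity is non-decreasing in $s$, and by Theorem~\ref{T:scheme} the limit $C_s := \lim_{k\to\infty} b_{k,s}$ exists and is non-decreasing in $s$, so $C_\infty := \lim_{s\to\infty} C_s$ exists in $[0,1]$. If $C_\infty = 1$, the corollary is immediate via the diagonal trick already used in Corollary~\ref{C:diagonal}: given $\varepsilon > 0$, choose $s_\varepsilon$ with $C_{s_\varepsilon} \geq 1-\varepsilon$; strict monotonicity of $g$ yields $g(k) \geq s_\varepsilon$ for $k$ large, so $b_{k,g(k)} \geq b_{k,s_\varepsilon}$ and $\liminf_{k\to\infty} b_{k,g(k)} \geq 1-\varepsilon$; let $\varepsilon \to 0$.

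The task reduces to proving $C_\infty = 1$, equivalently
$$ \lim_{s\to\infty} \lim_{k\to\infty}\frac{\#\{f \in H^0(X, \mathcal O_X(D+kE)) \mid \length(\Sigma(f)) \geq s\}}{\#H^0(X, \mathcal O_X(D+kE))} = 0. $$
Hypersurfaces with a positive-dimensional singular locus have density zero by Corollary~\ref{C:positive}, so we may restrict to $f$ with isolated singularities. For such $f$ the identity $\length(\Sigma(f)) = \sum_P \length_P(\Sigma(f))$ together with a pigeonhole argument gives the dichotomy: if $\length(\Sigma(f)) \geq s$ then either $V(f)$ has more than $\sqrt{s}$ singular points, or some closed point $P$ satisfies $\length_P(\Sigma(f)) \geq \sqrt{s}$. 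The density of the first alternative tends to zero as $s \to \infty$ by Corollary~\ref{C:diagonal}.

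The principal obstacle is therefore to show that the density of $f \in H^0(X, \mathcal O_X(D+kE))$ admitting some closed point $P$ with $\length_P(\Sigma(f)) \geq N$ vanishes in the double limit $k \to \infty$, $N \to \infty$. This should be established by a low/medium/high-degree sieve analogous to Section~\ref{S:sieve}. At low-degree closed points $P$ (of degree below a fixed threshold $r$) the density in question converges to $\sum_{a \geq N} \mu_P(a)$ as $k \to \infty$; since $\sum_{a \geq 0} \mu_P(a) \leq 1$, these tails vanish as $N \to \infty$, and there are only finitely many such $P$. The medium-degree contribution is handled as in Lemma~\ref{L:medium}, using the Lang--Weil bound together with $\nu_P(D) = \deg P \cdot (\dim X + 1)$ at smooth points (Lemma~\ref{L:nuP}(2)). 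High-degree points are negligible by Lemma~\ref{L:high}. Assembling the three pieces and sending first $N \to \infty$ and then $s \to \infty$ yields $C_\infty = 1$, completing the proof.
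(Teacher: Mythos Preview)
Your diagonal setup and the reduction to $C_\infty = \lim_{s\to\infty}\lim_{k\to\infty} b_{k,s} = 1$ match the paper exactly. From that point on, however, you take a considerably longer route than the paper does.

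The paper simply reads off the explicit formula for $C_s$ from Theorem~\ref{T:scheme} and computes its limit. As in Lemma~\ref{L:zeta}, one reduces (by monotonicity) to showing, for each fixed $r$, that
\[
\lim_{s\to\infty}\sum_{(a_P)_{P\in X_{<r}}:\ \sum_P a_P < s}\ \prod_{P\in X_{<r}}\mu_P(a_P)=1,
\]
which follows from $\sum_{a\geq 0}\mu_P(a)=1$ for each closed point $P$. No pigeonhole, no new sieve: the medium- and high-degree work is already packaged into the formula of Theorem~\ref{T:scheme}.

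Your approach is not wrong, but the extra machinery does not buy you anything. The pigeonhole splitting into ``many singular points'' versus ``one point with large local length'' and the subsequent low/medium/high sieve for the second alternative ultimately rest on the \emph{same} identity $\sum_{a\geq 0}\mu_P(a)=1$. Indeed, your low-degree step asserts that the density of $f$ with $\length_P(\Sigma(f))\geq N$ equals $\sum_{a\geq N}\mu_P(a)$; but the density is actually $1-\sum_{a<N}\mu_P(a)$, and the two agree (and tend to $0$) only because $\sum_{a\geq 0}\mu_P(a)=1$. The inequality $\sum_a\mu_P(a)\leq 1$ alone does not give vanishing of these ``tails'' in the sense you need. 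Once you grant that identity, the paper's direct computation is both shorter and cleaner. A minor point: your citation of Corollary~\ref{C:diagonal} for alternative (A) is slightly off---what you actually need is the iterated limit $\lim_{s}\lim_{k}a_{k,s}=1$, which is the content of Theorem~\ref{T:finite} combined with Lemma~\ref{L:zeta}(2), i.e.\ a step \emph{in} the proof of Corollary~\ref{C:diagonal} rather than its statement.
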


\begin{proof}
Applying a similar strategy as in the proofs of Lemma~\ref{L:zeta} and Corollary~\ref{C:diagonal}, it suffices to show that
$$\lim_{s \to \infty} \sum_{\sum_P a_P < s} \prod_{P \in X_{<r}} \frac{\mu_{P}(a_P)}{\mu_P(0)} = \prod_{P \in X_{<r}} \frac{1}{1 - q^{-\deg P(\dim X + 1)}}, \quad r \geq 1,$$
or equivalently,
$$ \lim_{s \to \infty} \sum_{(a_P)_{P \in X_{<r}}:\, \sum_P a_P < s} \prod_{P \in X_{<r}}\mu_{P}(a_P) = 1, \quad r \geq 1.$$
This follows easily from the fact that
$$ \sum_{a \geq 0} \mu_P(a) = 1$$
for all closed points $P \in X$, which is a consequence of Theorem~\ref{T:scheme} and Corollary~\ref{C:positive}.
\end{proof}

\begin{remark}
 Over the complex numbers, it is known that the singular scheme of a non-factorial nodal hypersurface of degree $k \geq 3$ in $\mathbb P^4$ has length at least $(k-1)^2$, see e.g. \cite{Cheltsov}*{Theorem~1.4}. If an analogous result holds in positive characteristic, Corollary~\ref{C:diagonalscheme} will show that the density of non-factorial nodal hypersurfaces in $\mathbb P^4$ over $\mathbb F_q$ is zero.
\end{remark}

\begin{bibdiv}
\begin{biblist}

\bib{CoxQsm}{article}{
  title={On the Hodge structure of projective hypersurfaces in toric varieties},
  author={Batyrev, V.},
  author={Cox, D.},
 journal={Duke Math. J.},
   volume={75},
   date={1994},
   number={2},
   pages={293--338},
}
\bib{Boubakri}{article}{
  year={2012},
  issn={1139-1138},
  journal={Rev. Mat. Complut.},
  volume={25},
  number={1},
  title={Invariants of hypersurface singularities in positive characteristic},
  publisher={Springer Milan},
  author={Boubakri, Y.},
  author={Greuel, G.-M},
  author={Markwig, T.},
  pages={61-85},
}
\bib{Cheltsov}{article}{
  title={Factorial threefold hypersurfaces},
  author={Cheltsov, I.},
  journal={J. Algebraic Geom.},
  volume={19},
  number={4},
  pages={781--791},
  year={2010},
}
\bib{Cox}{article}{
  author={Cox, D.},
  title={The homogeneous coordinate ring of a toric variety},
   journal={J. Algebraic Geom.},
   volume={4},
   date={1995},
   number={1},
   pages={17--50}
}
\bib{CLS}{book}{
author = {Cox, D.},
author = {J. Little},
author = {H. Schenck},
title  = {Toric varieties},
series={Graduate Studies in Mathematics},
volume={124},
publisher={American Mathematical Society, Providence, RI},
date   = {2011},
}
\bib{EH}{article}{
title = {A Nullstellensatz with nilpotents and Zariski's Main Lemma on holomorphic functions},
journal = {J. Algebra},
volume = {58},
number = {1},
pages = {157--161},
year = {1979},
author = {D. Eisenbud},
author = {M. Hochster},
}
\bib{Semiample}{article}{
 author = {{Erman}, D.},
 author = {{Matchett Wood}, M.},
    title = {Semiample Bertini theorems over finite fields},
   journal={Duke Math. J.},
   volume={164},
   date={2015},
   number={1},
   pages={1--38},
}
\bib{Hartshorne}{book}{
  title={Algebraic geometry},
  author={Hartshorne, R.},
  note={Graduate Texts in Mathematics, No. 52},
  publisher={Springer-Verlag, New York-Heidelberg},
  date={1977},
}
\bib{LW}{article}{
  title = {Number of Points of Varieties in Finite Fields},
  author = {Lang, S.},
  author = {Weil, A.},
  journal = {Amer. J. Math},
  volume = {76},
  number = {4},
  pages = {819--827},
  url = {http://www.jstor.org/stable/2372655},
  year = {1954},
  publisher = {The Johns Hopkins University Press},
}
\bib{Mumford}{article}{
  author={Mumford, D.},
  title={Varieties defined by quadratic equations},
   conference={
      title={Questions on Algebraic Varieties},
      address={C.I.M.E., III Ciclo, Varenna},
      date={1969},
   },
   book={      publisher={Edizioni Cremonese, Rome},   },
   date={1970},
   pages={29--100},
}
\bib{Poonen}{article}{
  author={Poonen, B.},
  title={Bertini theorems over finite fields},
  journal={Ann. of Math. (2)},
  volume={160},
  date={2004},
  number={3},
  pages={1099--1127},
}

\end{biblist}
\end{bibdiv}

\vspace{2\baselineskip}

\begin{minipage}{0.45\textwidth}
\footnotesize
\textit{Address:}\\
{\scshape
Institut f\"ur Mathematik\\
Humboldt-Universit\"at zu Berlin\\
Unter den Linden 6\\
10099 Berlin\\
Germany}
\\[\baselineskip]
\textit{E-mail:} \texttt{lindnern@math.hu-berlin.de}
\end{minipage}
\begin{minipage}{0.5\textwidth}
\footnotesize
\textit{Current address:}\\
{\scshape
Institut f\"ur Algebraische Geometrie\\
Leibniz Universit\"at Hannover\\
Welfengarten 1\\
30167 Hannover\\
Germany}\\[\baselineskip]
\end{minipage}

\end{document}